\newcommand{\bg}{{\overline{g}}}
\newcommand{\bu}{{\overline{u}}}
\newcommand{\bz}{{\overline{z}}}
\newcommand{\be}{{\overline{e}}}
\newcommand{\bG}{{\overline{G}}}
\newcommand{\ve}{\varepsilon}
\newcommand{\vp}{\varphi}
\newcommand{\ld}{\ldots}
\newcommand{\cg}{C^{\gr}}
\DeclareMathOperator*{\ot}{\otimes}%allows placement of subscript below in displaymath
\newcommand{\beq}{\begin{equation}}
\newcommand{\eeq}{\end{equation}}
\newcommand{\beas}{\begin{eqnarray*}}
\newcommand{\eeas}{\end{eqnarray*}}
\newcommand{\id}{\mathrm{id}}%identity map
\newcommand{\cM}{\mathcal{M}}
\newcommand{\R}{\mathbb{R}}
\newcommand{\C}{\mathbb{C}}
\renewcommand{\H}{\mathbb{H}}
\newcommand{\zz}{\mathbb{Z}_2\times\mathbb{Z}_2 }
\newcommand{\z}{\mathbb{Z}_2}
\newcommand{\NN}{\mathbb{N}}
\newcommand{\ZZ}{\mathbb{Z}}
\newcommand{\FF}{\mathbb{F}}
\DeclareMathOperator{\End}{\mathrm{End}}
\DeclareMathOperator{\alg}{\mathrm{alg}}
\DeclareMathOperator{\Ker}{\mathrm{Ker}\,}
\DeclareMathOperator{\supp}{\mathrm{Supp}\,}
\DeclareMathOperator{\Supp}{\mathrm{Supp}}
\newtheorem{theorem}{Theorem}[section]
\newtheorem{lemma}[theorem]{Lemma}
\newtheorem{proposition}[theorem]{Proposition}
\newtheorem{definition}[theorem]{Definition}
\newtheorem{remark}[theorem]{Remark}
\newcommand{\gr}{\mathrm{gr}}
\begin{document}

\title{Simple Graded Division Algebras over the Field of Real Numbers}

\author[Bahturin]{Yuri Bahturin}
\address{Department of Mathematics and Statistics, Memorial
University of Newfoundland, St. John's, NL, A1C5S7, Canada}
\email{bahturin@mun.ca}

\author[Zaicev]{Mikhail Zaicev}
\address{Faculty of Mathematics and Mechanics, Moscow University, Russia}
\email{mzaicev@mail.ru}

\thanks{{\em Keywords:} graded algebra, simple Lie algebra, grading, primitive algebra, functional identity}
\thanks{{\em 2000 Mathematics Subject Classification:} Primary 17B70; Secondary 16R60, 16W50, 17B60.}
\thanks{The first author acknowledges support by by NSERC grant \# 227060-09. The second author acknowledges support by }

\begin{abstract}
We classify, up to equivalence, all finite-dimensional  simple graded division algebras over the field of real numbers. The grading group is any finite abelian group.
\end{abstract}

\maketitle

%-------------------------------------------------------------------------------
\section{Introduction} A unital algebra $R$ over a field $F$ graded by a group $G$ is called \textit{graded division} if every nonzero homogeneous element is invertible. Clearly, each such algebra is graded simple, that is, $R$ has non nonzero graded ideals. In the classification of gradings on simple finite-dimensional algebras one is interested in graded division algebras which are simple at the same time. Indeed, according to graded analogues of Schur's Lemma and Density Theorem (see, for example, \cite[Chapter 1]{EK}) any such algebra is isomorphic to the algebra $\End_DV$ of endomorphisms of a finite-dimensional graded (right) vector space over a graded division algebra $D$. Since $R$ is simple, it is obvious that $D$ must be simple, as well. 

In the case where the field $F$ is algebraically closed and the group $G$ is finite abelian, all such graded division algebras have been described in \cite{BSZ} and \cite{BZ02}. For full account see \cite[Chapter 1]{EK}, where the authors treat also the case of Artinian algebras. In \cite{BBZ} (see also \cite{BZ10}, for a particular case) the authors treat the case of primitive algebras with minimal one-sided ideals. If such algebras are locally finite, the graded division algebras arising are finite-dimensional and so the description provided in the case of finite-dimensional algebras works in this situation, as well.

The main results of this paper are Theorem \ref{tgdar} and Theorem \ref{tM}. In Theorem \ref{tgdar} we list all equivalence classes of division gradings on simple finite-dimensional real associative algebras. In Theorem \ref{tM} we apply these results to the classification of all abelian group gradings on such algebras. A special feature of the real case is the usage of Clifford algebra, which provide a natural approach to the study of gradings in the case of real algebras.

\section{Preliminaries}\label{sPre}

A vector decomposition $\Gamma: V=\bigoplus_{g\in G}V_g$ is called a grading of a vector space $V$ over a field $F$ by a set $G$. The subset $S$ of all $s\in G$ such that $V_s\neq \{ 0\}$ is called the \textit{support} of $\Gamma$ and is denoted by $\Supp\Gamma$ (also as $\Supp V$, if $S$ is endowed just by one grading). If $\Gamma': V'=\bigoplus_{g'\in G'}V'_{g'}$ is a grading of another space then a homomorphism of gradings $\vp:\Gamma\to\Gamma'$ is a linear map $f:V\to V'$ such that for each $g\in G$ there exists (unique) $g'\in G'$ such that $\vp(V_g)\subset V_{g'}$. If $\vp$ has an inverse as homomorphism of grading then we say that $\vp:\Gamma\to\Gamma'$  is an \textit{equivalence} of gradings $\Gamma$ and $\Gamma'$ (or graded vector spaces $V$ and $V'$).

A grading $\Gamma: R=\bigoplus_{g\in G}R_g$ of an algebra $R$ over a field $F$ is an \textit{algebra} grading if for any $s_1,s_2\in\Supp\Gamma$ such that $R_{s_1}R_{s_2}\ne \{0\}$ there is $s_3\in G$ such that $R_{s_1}R_{s_2}\subset R_{s_3}$. Two algebra gradings $\Gamma: R=\bigoplus_{g\in G}R_g$ and $\Gamma': R'=\bigoplus_{g\in G'}R'_{g'}$ of algebras over a field $\FF$ are called \textit{equivalent} if there exist an algebra isomorphism $\vp:R\to R'$, which is an equivalence of vector space gradings. In this case there is a bijection $\alpha:\Supp\Gamma\to\Supp\Gamma'$ such that $\vp(R_g)=R'_{\alpha(g)}$.

If $G$ is a group then a grading $\Gamma: R=\bigoplus_{g\in G}R_g$ of an algebra $R$ over a field $F$ is called a \textit{group grading} if for any $g,h\in G$, we have $R_gR_h\subset R_{gh}$. Normally, it is assumed that the grading group $G$ is generated by $\Supp\Gamma$. If $\vp:\Gamma\to Gamma':R\to R'$ is an equivalence of gradings of algebras $R$ and $R'$ by groups $G$ and $G'$ and the accompanying bijection $\alpha:\Supp\Gamma\to\Supp\Gamma'$ comes from an isomorphism of groups $\alpha:G\to G'$ then we call $\vp$ a \textit{weak isomorphism} and say that $\Gamma$ and $\Gamma'$ (also $R$ and $R'$) are \textit{weakly isomorphic}. Finally, if $G=G'$ and $\alpha=\id_G$ then $\Gamma$ and $\Gamma'$ are called \textit{ isomorphic}.

Note that if, say, $\Gamma$ is a \textit{strong grading}, that is, $R_gR_h=R_{gh}$ then $\Gamma$ and $\Gamma'$ are equivalent if and only if they are weakly isomorphic. 

If $\Gamma: R=\bigoplus_{g\in G}R_g$ and $\Gamma': R=\bigoplus_{g'\in G'}R_{g'}^\prime$ are two gradings of the same algebra labeled by the sets $S$ and $S'$ then we say that $\Gamma$ is the \textit{refinement} of $\Gamma'$ if for any $s\in S$ there is $s'\in S'$ such that $R_s\subset R_{s'}$. We also say that $\Gamma'$ is the \textit{coarsening} of $\Gamma$. The refinement $\Gamma$ is proper if for at least one $s$ the containment $R_s\subset R_{s'}$ is proper. A grading which does not admit proper refinements is called \textit{fine}. Assume $\Gamma,\Gamma'$ are group gradings, so that $G'=G/T$. If $R^\prime_{\bg}=\bigoplus_{g\in\bg}R_g$, for all $\bg\in G'$, then $\Gamma'$ is a coarsening of $\Gamma$ called \textit{factor-grading}. In the case of complex gradings all division gradings are fine, while in the case of real numbers this is no more true.
      
Finally, if $\Gamma: R=\bigoplus_{g\in G}R_g$, the group $U(\Gamma)$ generated by $\Supp(\Gamma)$ subject to defining relations $s_1s_2=s_3$ each time when $\{ 0\}\neq R_{s_1}R_{s_2}\subset R_{s_3}$, is called the\textit{ universal group} of $\Gamma$.

\subsection{Basic properties of division gradings}\label{ssBP}
We start with fixing few well-known useful properties (see in the textbook \cite[Chapter 2]{EK}).
\begin{lemma}\label{lsf} Let $\Gamma: R=\bigoplus_{g\in G} R_g$ be a grading by a group $G$ on an (associative) algebra $R$ over a field $F$. Then the following hold.
\begin{enumerate}
\item $\Supp(\Gamma)$ is a subgroup of $G$ isomorphic to the universal group $U(\Gamma)$
\item $\Gamma$ is a division grading if and only if the identity component $R_e$ of $\Gamma$ is a division algebra over $F$
\item Given $g\in G$ and a nonzero $a\in R_g$, we have $R_g=aR_e$
\item $\dim R_g=\dim R_e$ and $\dim R=|\Supp\Gamma|\dim R_e$.\hfill$\square$
\end{enumerate}
\end{lemma}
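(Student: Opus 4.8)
The plan is to establish the four assertions of Lemma~\ref{lsf} in the order listed, since each one feeds into the next. The central object is the multiplication map $R_g \times R_h \to R$, and the key structural input is that in a division grading every nonzero homogeneous element is invertible, with the inverse again homogeneous; this follows at once from the uniqueness of inverses once one observes that the identity $1 = 1_e$ lies in $R_e$, so if $a \in R_g$ and $ab = 1$ with $b = \sum_h b_h$, then comparing homogeneous components forces $b = b_{g^{-1}} \in R_{g^{-1}}$.

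First I would prove (2), the characterization of division gradings. If $R_e$ is a division algebra, take a nonzero homogeneous $a \in R_g$; then $a^{-1} \in R_{g^{-1}}$ (if $a$ is invertible in $R$) — but to get invertibility in the first place, consider left multiplication $\lambda_a : R_e \to R_g$. One checks this is injective (if $ax = 0$ for $x \in R_e$ nonzero, then $a = a x x^{-1} = 0$) and, by a dimension count or by exhibiting a homogeneous preimage of $1$, surjective onto $R_g$ — actually the cleaner route is: the map $R_{g^{-1}} \to R_e$, $b \mapsto ab$, lands in $R_e$; show it is nonzero hence (if we already knew $R_{g^{-1}} \neq 0$) we can solve $ab = 1$. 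I would instead argue directly: for nonzero $a \in R_g$, the set $\{x \in R : ax = 0\}$ is a graded right ideal, hence (by graded simplicity, which holds as $R$ is graded division) zero or all of $R$; it is not all of $R$ since $a \cdot 1 = a \neq 0$, so $\lambda_a$ is injective, and symmetrically $\rho_a$ is injective. A finite-dimensional consideration is best avoided here since the lemma does not assume finite dimension; instead use that $R_e a R_e$ is a graded ideal. Conversely, if $\Gamma$ is division then $R_e$ has all nonzero elements invertible (their inverses lie in $R_e$ by the component-comparison argument above), so $R_e$ is a division algebra.

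Next, (3): given nonzero $a \in R_g$, the inclusion $a R_e \subseteq R_g$ is clear, and for the reverse, take any $b \in R_g$; then $a^{-1} b \in R_{g^{-1}} R_g \subseteq R_e$, so $b = a(a^{-1}b) \in a R_e$. This also shows $a$ is invertible and $a^{-1} \in R_{g^{-1}}$. From (3), assertion (4) is immediate: $x \mapsto ax$ is a bijection $R_e \to R_g$ (injective by the division property, surjective by (3)), so $\dim R_g = \dim R_e$, and summing over the support gives $\dim R = |\Supp \Gamma|\,\dim R_e$. Finally, for (1), the support is closed under multiplication because $R_g R_h \ni a b \neq 0$ for nonzero homogeneous $a, b$ (as $a,b$ are invertible, so is $ab$), hence $gh \in \Supp\Gamma$ and moreover $R_g R_h \subseteq R_{gh}$ is then forced — so the support is a subgroup; and the universal group $U(\Gamma)$ surjects onto $\Supp\Gamma$ (viewed as a subgroup of $G$) by the universal property, while the relations defining $U(\Gamma)$ are exactly the relations $s_1 s_2 = s_3$ holding in $G$, making this surjection an isomorphism.

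The step requiring the most care is (2) in the direction ``$R_e$ division $\Rightarrow$ $\Gamma$ division,'' specifically proving that left multiplication by a nonzero homogeneous element is bijective on the relevant components without invoking finite-dimensionality; the graded-ideal argument (the left annihilator of $a$ is a graded right ideal, which must vanish by graded simplicity of a graded division algebra) is the clean way through, but one must first verify that a graded division algebra is indeed graded simple — which is exactly the observation recorded in the Introduction: a nonzero graded ideal contains a nonzero homogeneous element, hence a unit, hence is everything. Everything else is bookkeeping with homogeneous components.
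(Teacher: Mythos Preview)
The paper does not actually prove this lemma: it is stated with a terminal $\square$ and a reference to \cite[Chapter 2]{EK}, so there is no in-paper argument to compare against. Your treatment of (1), (3), and (4), under the assumption that $\Gamma$ is a division grading, is correct and is the standard one.

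The genuine gap is the ``if'' direction of (2). You correctly flag this as the delicate step, but your proposed resolution is circular: to show a nonzero homogeneous $a$ has trivial annihilator you invoke graded simplicity, and you justify graded simplicity by appealing to the fact that a graded division algebra is graded simple --- but that is precisely what you are trying to establish. Even granting graded simplicity, the annihilator argument yields only that $\lambda_a$ is injective, not that $a$ is invertible. In fact the implication ``$R_e$ a division algebra $\Rightarrow$ $\Gamma$ a division grading'' is \emph{false} at the level of generality you are aiming for: take $R=F[x]$ with the $\ZZ$-grading by degree (so $R_0=F$ is a field but $x$ is not invertible), or $R=F[x]/(x^2)$ with the natural $\ZZ_2$-grading for a finite-group counterexample. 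The lemma appears in a section titled ``Basic properties of division gradings'' and the paper works throughout with finite-dimensional simple algebras, so the intended reading supplies extra hypotheses (graded simplicity together with finite-dimensionality, after which the graded Wedderburn theorem gives $R\cong\End_D V$ with $D$ graded division, and $R_e$ being a division algebra forces $\dim_D V=1$). Your instinct to avoid finite-dimensionality is therefore misplaced: without some finiteness or simplicity hypothesis, (2) as you read it simply does not hold, and no amount of rearranging the ideal argument will close the gap.
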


Since our base field is $\R$, it follows that $R_e$ is one of $\R$, $\C$, or $\H$, the last two being the field of complex numbers or the division algebra of quaternions,  respectively. Both $\C $ and $ \H $ can be endowed by nontrivial real gradings by $\z $ in the case of $\C $ and $ \H $ and $\zz $ in the case of $ \H $. These are, for example, $ \C=\langle 1\rangle\oplus \langle i\rangle$ and $ \H=\langle 1, i \rangle\oplus \langle j, k\rangle$, in the case of $\z $ and $ \H=\langle 1 \rangle\oplus \langle i\rangle \oplus \langle j\rangle \oplus \langle k\rangle$ in the case of $\zz $ . We will denote the above graded algebras by $\C^{ (2)}$, $\H^{ (2)}$ and $\H^{ (4)}$.

As mentoned above, the support of the division grading is a subgroup in the grading group. This makes it natural to always assume that the support of $R$ equals the whole of $G$. Clearly, every homogeneous component $R_g$ of $R$, $g\in G$,  has the same dimension as $R_e$ and hence $\dim R=|G|\dim R_e$.  Thus, when we speak about gradings on finite-dimensional division graded algebras, we may assume that the grading group $G$ is finite.

One notational remark. Given an element $g$ of order $n$ in a group $G$, we denote by $(g)_n$ the cyclic subgroup generated by $g$. Given vectors $v_1,\ld,v_m$ in a real vector space $V$, we denote by $\langle v_1,\ld,v_m\rangle$ the linear span of $v_1,\ld,v_m$, with coefficients in $\R$. To avoid confusion with number $1\in\R$, we will denote the identity element of a graded division algebra $R$ by $I$. 

\subsection{Sylvester/Pauli gradings}\label{ssp}
 
 One of the important cases of real simple algebras are the algebras $M_n(\C)$ of $n\times n$ complex matrices. This real algebra is at the same time an algebra over $\C$. If $\Gamma:R=\bigoplus_{g\in G}R_g$ is a grading of $R$ over $\C$ then it is also a grading of $R$ over $\R$. We denote this latter grading by $\Gamma_\R$. If $\Gamma$ is a division grading then, clearly, $\Gamma_\R$ is a division grading. We will call such gradings Pauli gradings. Given another complex division grading $\Gamma'$ of $R$ by $G$, if $\Gamma_\R$ and $\Gamma'_\R$ are weakly isomorphic then there exist a group automorphism $\alpha:G\to G$ and automorphism $\vp$ of $R$, as a real algebra, such that  $\vp(R_g)=R'_{\alpha(g)}$.  We have $R_e=R'_e\cong\C$ and so $\vp$ is an isomorphism of $\C$, that is, either the restriction of the identity map  $\id_R$ or of the complex conjugation $\iota:R\to R: A\mapsto\overline{A}$. In both cases,  either $(\alpha,\vp)$ or $(\alpha,\vp\circ\iota)$ perform a weak equivalence of $\Gamma$ and $\Gamma'$. 
 
 The result is that two Pauli gradings are weakly isomorphic if and only if they come from weakly isomorphic complex division gradings. Such classification has been performed in \cite{BZ03}.

In that classification we have used  complex \textit{generalized Pauli} or \textit{Sylvester}  matrices, which define a standard division $\ZZ_n\times\ZZ_n$-grading on the matrix algebras $M_n(\C)$, over the field $\C$ of complex numbers. These are products of two generating matrices

\begin{equation}\label{ePauli}
X_a=\left(\begin{array}{ccccc}1&0&0&\cdots&0\\0&\ve&0&\cdots&0\\0&0&\ve^2&\cdots&0\\\cdots&\cdots&\cdots&\cdots&\cdots\\
0&0&0&\cdots&\ve^{n-1}\end{array}\right),\;X_b=\left(\begin{array}{ccccc}0&1&0&\cdots&0\\0&0&1&\cdots&0\\0&0&0&\cdots&0\\\cdots&\cdots&\cdots&\cdots&\cdots\\
1&0&0&\cdots&0\end{array}\right)
\end{equation}
Here $\ve$ is a primitive $n$-th root of 1 in $\C$. Note that Sylvester called $X_\alpha$ a \textit{clock} matrix and $X_\beta$ a \textit{shift} matrix. If $\ZZ_n\times\ZZ_n=(\alpha)_n\times(\beta)_n$ then the division grading in question is done by $n^2$ one-dimensional subspaces $R_{\alpha^k \beta^\ell}=\langle X_\alpha^kX_\beta^\ell\rangle_\C$, where $1\le k,\ell\le n$. According to \cite{BZ03}, every complex division grading on $M_n(\C)$ is weakly isomorphic to the (complex) tensor product $M_{k_1}(\C)\ot\cdots\ot M_{k_s}(\C)$ of the gradings just described. The choice of primitive roots $\varepsilon$ does not change the \textit{equivalence class} of these gradings. If $G=\ZZ_{k_1}^2\times\cdots\times \ZZ_{k_1}^2$ then we denote such graded algebra by $M(G)$.

One can describe Pauli gradings using skew-symmetric bicharacters on the grading group $G$. If $R=M_n(\C)$ is given a complex division grading with support $G$ then each $R_g$ is spanned by an invertible matrix $X_g$ such that $X_g^n=I$ and there is a function $\beta:G\times G\to \C^\times$ such that $X_gX_h=\beta(g,h)X_hX_g$. This function is a \textit{skew-symmetric bicharacter}, that is $\beta(g,h)\beta(h,g)=1$, for all $g,h\in G$ and $\beta(gh,k)=\beta(g,k)\beta(h,k)$, for all $g,h,k\in G$. The bicharacter $\beta$ is \textit{non-singular}, that is, for any $g\in G$ there is $h\in G$ such that $\beta(g,h)\neq 1$. Two division gradings $\Gamma$ and $\Gamma'$ with supports $G$ and $G'$ and skew-symmetric bicharacters $\beta$ and $\beta'$ are isomorphic if and only if there exists an isomorphism $\alpha:G\to G'$  such that $\beta'(\alpha(g),\alpha(h))=\beta(g,h)$, for all $g,h\in G$. For the equivalence, it is only necessary and sufficient to have the isomorphism $\alpha: G\to G'$. One more remark is that a division grading by a finite abelian group $G$ on $M_n(\C)$ exists (and unique up to equivalence) if and only if there is an abelian group $H$ of order $n$ such that $G\cong H\times H$ and the bicharacter $\beta$ is non-singular. For these facts see \cite[Theorem 2.38]{EK}. We denote the graded division algebras arising  in this way by $M(G,\beta)$.

Sylvester matrices of order 2 are especially important for the gradings on the real simple algebras. They are as follows:
\begin{equation}\label{ePauli2}
A=\left(\begin{array}{cc}1&0\\0&-1\end{array}\right)\!,\; B=\left(\begin{array}{cc}0&1\\1&0\end{array}\right)\!,\; C=\left(\begin{array}{cc}0&1\\-1&0\end{array}\right).
\end{equation}
Slightly different matrices are called \textit{Pauli matrices}:
\begin{equation}\label{ePauli3}
\sigma_x=\left(\begin{array}{cc}0&1\\1&0\end{array}\right)\!,\; \sigma_y=\left(\begin{array}{cc}0&-i\\i&0\end{array}\right)\!,\; \sigma_z=\left(\begin{array}{cc}1&0\\0&-1\end{array}\right).
\end{equation}
We have $A^2=I$, $B^2=I$, $C^2=-I$, $AB=C=-BA$, $BC=A=-AB$, $CA=B=-AC$. If $G=(\alpha)_2\times(\beta)_2\cong\zz$ and $\gamma=\alpha\beta$ then we have an important fine division grading of $R=M_2(\R)$, given by
$$
\Gamma: R_e=\langle I\rangle,\:R_\alpha=\langle A\rangle,\:R_\beta=\langle B\rangle,\:R_\gamma=\langle C\rangle.
$$
This is also Clifford grading $C^{\gr}(0,2)$, as can be seen from the next subsection. We denote this graded algebra by $ M_2^{ ( 4 ) } $. 

\subsection{Homogeneous Clifford gradings}\label{ssc}
Another source of gradings, which is essential for the case of algebras over the field $\R$ of real numbers, is as follows. Let $(V,q)$ be a real finite-dimensional vector space endowed with non-singular quadratic form $q:V\to \R$. Let $T(V)$ be the tensor algebra of $V$ and $J_q$ the two-sided ideal generated by all elements $x\ot x+q(x)1$, where $x\in V$. Then $C(V,q)=T(V)/J_q$ is called the \textit{Clifford algebra} of $(V,q)$. It is well-known that if $q$ is non-singular with positive inertia index $p$ and negative inertia index $m$ and $n=\dim V=2k$ is even then  $C(V,q)$ is central simple, that is, either $M_{2^k}(\R)$ or  $M_{2^{k-1}}(\H)$. More precisely, if $p-m\equiv 0$ or $6 \mod 8$ then $C(V,q)\cong M_{2^k}(\R)$. If $p-m\equiv 2$ or $4 \mod 8$ then $C(V,q)\cong M_{2^{k-1}}(\H)$.   Also, if $n=2k+1$ and $q$ has positive inertia index $p$ and negative $m$ with $p-m\equiv 1 \mod 4$ then $C(V,q)\cong M_{2^k}(\C)$.

Now let $G$ be an elementary abelian 2-group and $g_1,\ld,g_n\in G$. Choose an orthogonal basis $e_1,\ld,e_n$ in $V$ and set $\deg e_1=g_1,\ld \deg e_n=g_n$. Then a natural grading appears on the tensor algebra $T(V)$ if one sets $\deg(e_{i_1}\ot\cdots\ot e_{i_k})=g_{i_1}\cdots g_{i_k}$. If $x\in V$ has the form of $x=\sum_{k=1}^n\alpha_ke_k$ then the generators of $J_q$ can be written as 
$$
x\ot x+q(x).1=\sum_{k=1}^n \alpha_1^2(e_k\ot e_k+q(e_k)1)+\sum_{1\le k<\ell\le n}^n\alpha_k\alpha_\ell(e_ke_\ell+e_\ell e_k).
$$
Thus $J_q$ is generated by $G$-homogeneous elements $e_k\ot e_k+q(e_k)1$, $1\le k\le n$ and  $e_ke_\ell+e_\ell e_k$ where $1\le k<\ell\le n$. Hence $C(V,q)=T(V)/J_q$ acquires a (canonical) factor-grading by $G$, given on the canonical basis by $\deg e_{i_1}\cdots e_{i_k}=g_{i_1}\cdots g_{i_k}$.

Since $G$ is an elementary abelian 2-group, one can view $G$ as a vector space over the field $\z$. If all $g_1,\ldots, g_n$ are linearly independent (over $\z$) then the grading has all components one-dimensional (over $\R$) and generated by invertible elements. In this case we have a \textit{fine} graded division algebra. Any other choice of linearly independent elements in $G$ leads to weakly isomorphic algebras. If $p$ is the positive and $m$ the negative inertia indices of $q$ then we denote the respective graded division algebra by $\cg(p,m)$.  As a result, we have the following.

\begin{proposition}\label{pc}
Each simple algebra $R=M_{2^k}(D)$, $D=\R,\C,\H$ acquires a fine division grading if one identifies $R$ with an appropriate $\cg(p,m)$.   
\end{proposition}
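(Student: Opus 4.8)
The plan is to read the statement off the classification of real Clifford algebras recalled just above, combined with the observation—also already made—that whenever the degrees $g_1,\dots,g_n$ are linearly independent in the elementary abelian $2$-group $G$, the canonical grading on $C(V,q)$ has all homogeneous components one-dimensional and spanned by invertible elements, hence is a fine graded division grading. In the construction of $\cg(p,m)$ we are therefore free to take $G=\z^{\,n}$ with $n=p+m$ and $g_1,\dots,g_n$ a basis, so that for every pair $(p,m)$ the algebra $\cg(p,m)$ genuinely carries a fine division grading. Thus the only thing left to do is, for each $D\in\{\R,\C,\H\}$ and each $k\ge 0$, to exhibit a pair $(p,m)$ with non-negative entries for which $C(V,q)\cong M_{2^k}(D)$ as an $\R$-algebra; identifying $R=M_{2^k}(D)$ with that $\cg(p,m)$ then proves the proposition.

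For $D=\R$ I would take $(p,m)=(k,k)$: then $\dim V=2k$ is even and $p-m=0\equiv 0\pmod 8$, so the quoted structure theorem gives $C(V,q)\cong M_{2^k}(\R)$. For $D=\H$ take $(p,m)=(k+2,k)$: then $\dim V=2(k+1)$ is even and $p-m=2\pmod 8$, so $C(V,q)\cong M_{2^{(k+1)-1}}(\H)=M_{2^k}(\H)$. For $D=\C$ take $(p,m)=(k+1,k)$: then $\dim V=2k+1$ is odd and $p-m=1\equiv 1\pmod 4$, so $C(V,q)\cong M_{2^k}(\C)$. In all three cases $p,m\ge 0$, so the pairs are admissible, and the grading $\cg(p,m)$ we have attached is fine by the first paragraph.

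There is essentially no deep obstacle: the content of the proposition is a repackaging of the Clifford-algebra classification, and the only care required is bookkeeping of the sign/inertia conventions—checking that each chosen $(p,m)$ has $\dim V=p+m$ of the right parity and lands in the correct residue class modulo $8$ (resp.\ modulo $4$) prescribed by the structure theorem in the conventions of this paper. As a consistency check one verifies the small cases directly: $\cg(0,0)=\R$; $\cg(1,0)=\R[e_1]/(e_1^2+1)\cong\C$; $\cg(2,0)\cong\H$, with $e_1,e_2,e_1e_2$ realizing $i,j,k$ (this is the grading $\H^{(4)}$); and $\cg(0,2)\cong M_2(\R)$, which is the grading $M_2^{(4)}$ described above—each visibly a fine graded division algebra, in agreement with the general formulas.
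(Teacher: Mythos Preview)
Your argument is correct and is exactly the approach the paper intends: the proposition is stated in the paper with no separate proof, only the phrase ``As a result, we have the following,'' meaning it is to be read off the Clifford-algebra structure theorem and the observation about linearly independent $g_i$ recalled immediately before. You have simply made that reading-off explicit, with correct choices of $(p,m)$ in each case and correct bookkeeping of the residue classes.
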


\subsection{Non-homogeneous Clifford gradings}\label{ssogc} Clifford algebras $C(p,m)$ with $p-m=1 \mod 4$ can be endowed with a $\ZZ_4\times\z^{p+q-1}$ division grading. Since we will need these gradings only for $M_2(\C)$ and $M_4(\C)$, we define them only in these cases. In the case of $R=C(2,1)$, we choose $G=(\alpha)_4\times (\beta)_2$, set  $R_{\alpha}= \langle e_1+e_2e_3\rangle$ and $R_\beta=\langle e_2\rangle$. Then, since $R_gR_h=R_{gh}$, for all $g,h\in G$, we obtain
$$R_{\alpha^2}=\langle e_1e_2e_3\rangle,\;R_{\alpha^3}= \langle e_1-e_2e_3\rangle.
$$
Also 
$$R_{\alpha\beta}=\langle e_3+e_1e_2\rangle,\; 
R_{\alpha^2\beta}=\langle e_1e_3\rangle,\;R_{\alpha^3\beta}=\langle e_3-e_1e_2\rangle.
$$
If we use one of the standard identifications (called \textit{Clifford maps}, see \cite[121]{G}) of $C(2,1)$ with $M_2(\C)$, given by $e_1\mapsto iB$,  $e_2\mapsto C$, $e_3\mapsto A$, then we will find the homogeneous components as 
\begin{eqnarray}\label{eq28}
R_e&=&\langle I\rangle,\;R_\alpha=\langle \omega A\rangle,\;R_{\alpha^2}=\langle iI\rangle,\;R_{\alpha^3}= \langle i\omega A\rangle\\
R_\beta&=&\langle C\rangle,\;R_{\alpha\beta}=\langle \omega B\rangle,\; 
R_{\alpha^2\beta}=\langle iC\rangle,\;R_{\alpha^3\beta}=\langle i\omega B\rangle.\nonumber
\end{eqnarray}
Here for brevity, instead of $1+i$, we have used $\omega=\frac{1}{\sqrt{2}}(1+i)$, the 8th root of 1 such that $\omega^2=i$. We denote this grading by $M_2^{(8)}$.

A $\ZZ_4$-division grading of $M_2(\C)$, which is not fine, appears as the coarsening of $M_2^{(8)}$ by means of identifying $\beta$ and $e$. 
\begin{equation}\label{eq28c}
R_\be=\langle I,C\rangle,\;R_{\overline{\alpha}}=\langle \omega A, \omega B\rangle,\;R_{\overline{\alpha}^2}=\langle iI,iC\rangle,\;R_{\overline{\alpha}^3}= \langle i\omega A,i\omega B\rangle.
\end{equation}

In terms of Clifford algebra $R=C(2,1)$, the same grading will be obtained if we set  $R_e=\langle I,e_2\rangle$, $R_{\alpha}= \langle e_1+e_2e_3, e_1e_2+e_3\rangle$ and $R_{\alpha^2}= \langle e_1e_2e_3, e_1e_3\rangle$ and $R_{\alpha^3}= \langle e_1-e_2e_3, e_1e_2-e_3\rangle$.

In this case, the identity component of the grading is isomorphic to $\C$.  We denote this grading by $M_2(\C,\ZZ_4)$

A $\ZZ_4\times\z^3$-grading on $C(3,2)\cong M_4(\C)$ appears if one select an element of order 8 in this algebra in the form $e_1+e_2e_3e_4e_5$ and write $G=(\alpha)_4\times(\beta)_2\times(\gamma)_2\times(\delta)_2$. We proceed as follows:
$$
R_\alpha=\langle e_1+e_2e_3e_4e_5\rangle,\;R_\beta=\langle e_2\rangle,\;R_\gamma=\langle e_3\rangle,\;R_\delta=\langle e_4\rangle.
$$
Inside this algebra, we find a graded subalgebra $S$ generated by $e_2,e_3,e_4$, which is isomorphic to $\cg(2,1)\cong M_2^{(8)}$. Furthermore, $R_{\alpha^2}=\langle e_1e_2e_3e_4e_5\rangle,\;R_{\alpha^3}=\langle e_1-e_2e_3e_4e_5\rangle$. An element $w\in S$ does not depend on $e_1,e_5$. At the same time, $(e_1+e_2e_3e_4e_5)
w$, for any monomial $w$ in $S$, is the sum of two monomials, one depending on $e_1$ but not on $e_5$, the other on $e_5$ but not on $e_1$. Also, $e_1e_2e_3e_4e_5w$ depends on both $e_1$ and $e_5$. This shows that $R$ is the direct sum of subspaces of the form $R_{\alpha^k\sigma}$, where $\sigma\in (\beta,\gamma,\delta)$.  As a result, we have a grading of $C(3,2)\cong M_4(\C)$ by $\ZZ_4\times\z^3$.

 Again, in terms of matrices, this grading can be rewritten as follows. We write $M_4(\C)=M_2(\C)\ot M_2(\R)$. Then a Clifford map can be given by mapping (see \cite[p.132]{G})
$$
e_1\mapsto (iI)\ot B,\:e_2\mapsto (iB)\ot A,\:e_3\mapsto (iA)\ot A,\:e_4\mapsto (iC)\ot A,\:e_5\mapsto (iI)\ot C,\:
$$
Then
$$
R_\alpha=\langle (\omega I)\ot B\rangle,\;R_\beta=\langle (iB)\ot A\rangle,\;R_\gamma=\langle (iA)\ot A\rangle,\;R_\delta=\langle (iC)\ot A\rangle.
$$ 
We will denote this grading by $M_4^{(32)}$. Again, a coarsening of this grading can be obtained if we identify $e$ with $\gamma$. Then the generating subspaces will be as follows.
$$
R_\be=\langle I\ot I, (iA)\ot A\rangle,\;R_{\overline{\alpha}}=\langle (\omega I)\ot B,(i\omega  A)\ot C\rangle,
$$
$$
R_{\overline{\beta}}=\langle (iB)\ot A,C\ot I\rangle,\;R_{\overline{\delta}}=\langle (iC)\ot A, B\ot I \rangle.
$$
so that this is a grading of $M_4(\C)$ by $\ZZ_4\times\z^2\cong G/(\gamma)_2$. We denote this grading by $M_4(\C,\ZZ_4\times\z^2)$.

In terms of $R=C(3,2)$, we can write the generating subspaces of same grading as $R_{\overline{\alpha}}=\langle e_1+e_2e_3e_4e_5,\,e_1e_3-e_2e_4e_5\rangle$, $R_{\overline{\beta}}=\langle e_2,e_2e_3,\rangle$, $R_{\overline{\delta}}=\langle e_4,e_3e_4\rangle$.

We will see, however, that $M_4^{(32)}$ is weakly isomorphic to the tensor product of $M_2^{(8)}$ and $M_2^{(4)}$.

An appropriate tool for establishing the equivalence or weak isomorphism of gradings is provided in the next subsection.

\subsection{Graded division algebras in terms of generators and defining relations}\label{ssgdr}
Let $A_m=F\langle x_1,\ld,x_m\rangle$ be a free associative unital algebra with free generators $x_1,\ld,x_m$. Let $r_1=r_1(x_1,\ld,x_m),\ld,r_\ell=r_\ell(x_1,\ld,x_m)$ be elements of $A_m$ and $J$ the two-sided ideal of $A_m$ generated by $r_1,\ld,r_\ell$. Then we say that $R=A_m/J$ is defined in terms of generators $x_1,\ld,x_m$ and defining relations $r_1=0,\ld,r_\ell=0$. The elements $u_1=x_1+J,\ld,u_m=x_m +J$ generate $R$, $r_1(u_1,\ld,u_m)=0,\ld,r_\ell=r_\ell(u_1,\ld,u_m)=0$ in $R$ and any other relation among $u_1,\ld,u_m$ is a consequence of these relations. We write $R=\langle x_1,\ld,x_m\;|\; r_1,\ld,r_\ell\rangle$. If $S$ is another algebra, generated by some elements $v_1,\ld,v_m$ so that $r_1(v_1,\ld,v_m)=0,\ld,r_\ell=r_\ell(v_1,\ld,v_m)=0$ then the mapping $u_1\mapsto v_1,\ld,u_m\mapsto v_m$ extends to a surjective homomorphism $\vp$ of $R$ onto $S$.  If $\dim S\ge\dim R$ then $\vp$ is an isomorphism. 

Additionally, suppose $A_m$ is given a grading by a group $G$ by assigning the degrees $g_1,\ld,g_m$ of $G$ to the free generators $x_1,\ld,x_m$. Assume that all \textit{relators} $r_1(x_1,\ld,x_m)$ are homogeneous with respect to this grading. Then $R=A/J$ naturally acquires a $G$-grading. If $S$ is a $G$-graded algebra generated by homogeneous $v_1,\ld,v_m$ of degrees $\alpha(g_1),\ld,\alpha(g_m)$ where $\alpha:G\to G$ is an automorphism then $R$ and $S$ are weakly isomorphic.

Let us give graded presentations of some of the fine graded division algebras above in terms of graded generators and defining relations.

\begin{equation}\label{eq28p}
M_2^{(8)}=(x_1,x_2\;|\;x_1^4=x_2^2=-1,\: x_1x_2=-x_2x_1),
\end{equation}
\begin{equation}\label{eq432p}
M_4^{(32)}=(x_1,x_2,x_3,x_4\;|\;x_1^4=\:x_k^2=-1 (k=2,3,4),\: x_\ell x_m=-x_mx_\ell (1\le \ell<m\le 4))
\end{equation}

\subsection{Tensor products of division gradings}\label{sstp}

Given groups $G_1, G_2,\ld,G_m$ and $G_k$-graded algebras $R_1, R_2,\ld, R_m$, $k=1,\ld,m$, one can endow the tensor product of algebras $R=R_1\ot R_2\ot\cdots\ot R_m$ by a $G=G_1\times G_2\times\cdots\times G_m$-grading, called the \textit{tensor product of gradings} if one sets 
$$
R_{(g_1,g_2,\ld,g_m)}=(R_1)_{g_1}\ot (R_2)_{g_2}\ot\cdots\ot (R_m)_{g_m}.
$$ 
Here $g_k\in G_i$, for all $k=1,2,\ldots,m$. 

In the case of division algebras over an algebraically closed field, the tensor product of two graded division algebras is a graded division algebra. This is no more true in our case. Indeed, $D_1\otimes D_2$, where $D_i=\R,\C,\H$ is a division algebra if and only if at least one of $D_i$ is $\R$. If $R$ is a $G$-graded division algebra, $S$ an $H$-graded division algebra, $(R\times S) _{(g,h)}=R_g\ot S_h$, for all $(g,h)\in G\times H$. Clearly, all nonzero elements in each homogeneous component are invertible if this is true for the identity component. As a result, a tensor product of two division gradings is a graded division grading only if the identity component of one of them is one-dimensional. This condition is not sufficient if we want to obtain a simple graded division algebra. We know that for this both tensor factors must be simple algebras. And even this is not sufficient, as shown by the example of $\C^{(2)}\ot \C^{(2)}$. 

Another question is the equivalence of different tensor product gradings. In the ungraded case, it is well-known (see \cite[Section 3.6]{G}) that $\H\ot\C\cong M_2(\R)\ot\C\cong M_2(\C)$ and $\H\ot\H\cong M_2(\R)\ot M_2(\R)\cong M_4(\R)$. 

These isomorphisms combine well with the gradings in the case where the grading of each factor is fine. In the first case, the isomorphism is given by $\vp:i\ot 1\mapsto B\ot i,\:j\ot 1\mapsto C\ot 1,\:k\ot 1\mapsto A\ot i$. Now $R=\H^{(4)}\ot\C^{(2)}$ is graded by $G=(\alpha)_2\times(\beta)_2)\times(\gamma)_2$ so that $\deg i\ot 1=\alpha$, $\deg j\ot 1=\beta$, $\deg 1\ot i=\gamma$. Let us choose the same group to grade $S=M_2^{(4)}\ot\C^{(2)}$ by $\deg A\ot 1=\alpha\beta\gamma$, $\deg B\ot 1=\alpha\gamma$, $\deg I\ot i=\gamma$. Then the supports of $M_2^{(4)}$, equal to $(\alpha\beta\gamma),\:(\alpha\gamma))$, and  $\C^{(2)}$, equal to $(\gamma)$, have trivial intersection, so this grading is a tensor product grading. Also, $\alpha\mapsto\alpha\beta\gamma,\:\beta\mapsto\alpha\gamma,\:\gamma\mapsto\gamma$ is an automorphism of $G$. This proves that $\vp:R\to S$ is a weak isomorphism of $\H^{(4)}\ot \C^{(2)}$ and $M_2^{(4)}\ot\C^{(2)}$. 

In a similar fashion one proves the weak isomorphism of $R=\H^{(4)}\ot\H^{(4)}$ and $S=M_2^{(4)}\ot M_2^{(4)}$. In this case, the graded equivalence $\psi:R\to S$ is given by $\psi(i\ot 1)=C\ot I$, $\vp(j\ot 1)=A\ot C$, $\psi(1\ot i)=I\ot C$, $\psi(1\ot j)=C\ot A$.

At the same time,  there is no graded equivalence between $R=\H^{(4)}\ot\H$, graded by $\zz$ and $S=M_2(\R)\ot M_2(\R)$, as graded tensor products. Indeed, if we give trivial grading to the second factor in $S$ then it is not a graded division algebra. If  we grade both factors by $\z$ then the identity component of the tensor product will be isomorphic to $\C\ot\C$, which is not a division algebra. Still, the map $\psi:\H^{(4)}\ot\H\to M_4(\R)$ transfers the structure of a graded division algebra, which is not a tensor product of graded algebras of smaller dimension. We will denote this grading by $M_4^{(4)}$. Its identity component is 4-dimensional, hence isomorphic to the quaternion algebra $\H$.

Note one more equivalence of graded tensor products: $R=M_2(\C,\ZZ_4)\ot\H^{(4)}$ and $S=M_2(\C,\ZZ_4)\ot M_2^{(4)}$. The presentation of $R$ in terms of generators and defining relations is
$$
(x_1,x_2,y_1,y_2\;|\;x_1^4=x_2^2=y_1^2=y_2^2=-1, x_1x_2=-x_2x_1, y_1y_2=-y_2y_1,x_iy_j=y_jx_i).
$$
Here $\deg x_1=\alpha$, $\deg x_2=e$, $\deg y_1=\beta$, $\deg x_2=\gamma$, where $o(\alpha)=4$, $o(\beta)=o(\gamma)=2$.

The presentation of $S$ in terms of generators and defining relations is
$$
(x_1,x_2,z_1,z_2\;|\;x_1^4=x_2^2=-z_1^2=-z_2^2=-1, x_1x_2=-x_2x_1, y_1y_2=-y_2y_1,x_iy_j=y_jx_i).
$$
Here $\deg x_1=\alpha$, $\deg x_2=e$, $\deg z_1=\beta$, $\deg z_2=\gamma$, where $o(\alpha)=4$, $o(\beta)=o(\gamma)=2$. Clearly, in the first presentation, we can perform a graded change of generators $z_1=y_1x_1^2$ and $z_2=y_2x_1^2$, which will lead to the equivalence of $R$ and $S$.
%\begin{remark}\label{r10} 
%\begin{enumerate}
%\item[\rm(i)] A tensor product grading in which the identity component is 2-dimensional and in the other 2- or 4-dimensional is not a division grading. This follows, because none of $\C\ot\C$ or $\C\ot\H$ is a division algebra.
%\item[\rm(ii)] If one of the components of the tensor product grading  is $M_n(\C)$ and the other is $\H^{(2)}$, or $H^{(4)}$, then we can replace $\H^{(2)}$ by $M_2^{(2)}$ and  $\H^{(4)}$ by $M_2^{(4)}$with ``similar'' grading. This follows because $iI\ot i$ and $iI\ot j$ generate $M_2(\R)$ in $M_n(\C)\ot \H$.
%\end{enumerate} 
%\end{remark}

\section{Low-dimensional graded division algebras. Main Theorem}\label{sld}

In this section we list some low-dimensional fine graded division algebras and their coarsenings. We also state our main result, which is the classification up to equivalence of simple algebras which are graded division algebras.

We have already mentioned the simplest examples of real graded division algebras, which are $\R$, $\C$, $\H$, $\C^{(2)}$, $\H^{(2)}$, $\H^{(4)}$ and $M_2^{(4)}$. In terms of homogeneous Clifford gradings,  $\C^{(2)}\cong \cg(1,0)$, $\H^{(4)}\cong \cg(2,0)$ and $M_2^{(4)}\cong\cg(0,2)$. These gradings are fine. Now $\C$ is a coarsening of $\C^{(2)}$, whereas $\H$ is a coarsening of $\H^{(2)}$ and $\H^{(2)}$ is a coarsening of $\H^{(4)}$. A coarsening $R$ of $M_2^{(4)}$, which is denoted by $M_2^{(2)}$, is obtained if we set $R_e=\langle I,A\rangle$ and $R_\alpha=\langle B,C\rangle$. This is a grading by $G=(\alpha)_2$.

Since $M_2(\R)$ can be written as $C(1,1)$ and $C(0,2)$, we can make it in a fine graded division algebra in two ways: as $\cg(1,1)$ and $\cg(0,2)$. There are different ways of establishing isomorphisms between Clifford algebras and matrix algebras (see \cite[Section 6.2]{G}). In the case of  $\cg(1,1)\cong M_2(\R)$, this is given by $e_1\mapsto C$, $e_2\mapsto B$. So $R_\alpha =\langle C\rangle$, $R_\beta=\langle B\rangle$, $R_{\alpha\beta}=\langle A\rangle$. 
In the case of $\cg(0,2)\cong M_2(\R)$, by $e_1\mapsto B$, $e_2\mapsto A$. As a result,
$R_\alpha =\langle B\rangle$, $R_\beta=\langle A\rangle$, $R_{\alpha\beta}=\langle C\rangle$.

From this matrix presentation, it is clear that $R=\cg(1,1)$ and $R'=\cg(0,2)$ are weakly isomorphic, that is, there is an automorphism $\vp:\zz\to\zz$ and a linear automorphism $f: \R^2\to\R^2$ such that $fR_g f^{-1}=R'_\vp(g)$, for any $g\in\zz$. However, because we deal with \textit{metric spaces}, there is no \textit{isometry} $f: (\R^2,x_1^2-x^2_2)\to(\R^2,x_1^2+x^2_2)$. So we may say that the gradings $\cg(1,1)$ and $\cg(0,2)$ are not \emph{isometrically} weakly isomorphic.

The only 8-dimensional real simple algebra is $R=M_2(\C)$. Since $M_2(\C)\cong M_2(\R)\ot\C\cong C(2,1)$, we can obtain division gradings on $R$ in various ways. Using tensor products, we obtain a fine division grading $M_2^{(4)}\ot\C^{(2)}$, where the grading group is $\z^3\cong(\alpha)_2\times(\beta)_2\times(\gamma)_2$, and the generating components are $R_\alpha=\langle A\ot 1\rangle$, $R_\beta=\langle B\ot 1\rangle$, $R_\gamma=\langle I\ot i\rangle$. The graded presentation of this algebra is 
\begin{equation}\label{eq10}
 (x_1,x_2,x_3\;|\; x_1^2=x_2^2=-x_3^2=1, x_1x_2=-x_2x_1,x_1x_3=x_3x_1,x_2x_3=x_2x_3).
\end{equation}
  Its $\zz\cong G/(\gamma)$-factor-grading is the Pauli grading on $M_2(\C)$, that is a grading given by the same formula as in (\ref{ePauli2}) but the linear spans must be taken over $\C$ rather than over $\R$.  One can also consider $M_2^{(2)}\ot\C^{(2)}$, which is another $\zz\cong G/(\alpha\beta)$-factor-grading of $M_2^{(4)}\ot\C^{(2)}$. Now since $M_2(\C)\cong C(2,1)$, a fine division grading can be given as $\cg(2,1)$. In terms of graded generators and defining relations this is
\begin{equation}\label{eq11}
 (y_1,y_2,y_3\;|\; y_1^2=y_2^2=-y_3^2=-1, y_py_q=-y_qy_p), 1\le p<q\le 3.
\end{equation}
Clearly a graded change of variables $x_1=y_2y_3$, $x_2=y_1y_3$, $x_3=y_1y_2y_3$ transforms (\ref{eq11}) to (\ref{eq10}).

In the case $G=\z$, we have $R=M_2^{(2)}\ot \C$. This is not a division grading because then $R_e\cong \C\ot\C$, which is not a division algebra. In the case $G=\zz$, we have  $M_2^{(2)}\ot \C^{(2)}$ and the Pauli grading $M_2^{(4)}\ot \C$. In the case $\ZZ_2^3$ we have $M_2^{(4)}\ot \C^{(2)}$.

All the previous examples were graded by an elementary abelian 2-group. A more subtle fine division grading on $M_2(\C)\cong C(2,1)$ appears when the grading group $G$ involves elements of order 4. We have described this ealier when we talked about non-homogeneous Clifford gradings. We denoted this grading by $M_2^{(8)}$.

We know that $\H\ot\H\cong M_2(\R)\ot M_2(\R)$. This isomorphism transfers the structure of graded division algebra from $(\H^{(4)}\ot\H)$ to $M_4(\R)\cong M_2(\R)\ot M_2(\R)$. As we mentioned earler, this grading of $M_4(\R)$ is not a tensor product of gradings on the tensor factors $M_2(\R)$. Thus $R=M_4(\R)$ acquires a division $\zz\cong(\alpha)_2\times(\beta)_2$-grading $M_4^{(4)}$ whose components are as follows (as usual, $\gamma=\alpha\beta$):
\begin{eqnarray}\label{eq12}
R_e&=&\langle I\ot I, C\ot I,A\ot C,B\ot C\rangle,\\R_\alpha&=&(I\ot C)R_e,\;R_\beta=(C\ot A)R_e,\;R_\gamma=(C\ot B)R_e,\nonumber
\end{eqnarray}

Finally, notice that the natural isomorphism $\vp:\H\ot\C\to M_2(\R)\ot\C$ defined by $\vp(i\ot z)=A\ot iz$, $\vp(j\ot z)=B\ot iz$ induces graded isomorphisms for the refinements $\vp:H^{(2)}\ot\C^{(2)}\to
M_2^{(2)}(\R)\ot\C^{(2)}$ and $\vp:H^{(4)}\ot\C^{(2)}\to
M_2^{(4)}(\R)\ot\C^{(2)}$. 

So the full list of ``building blocks'' which will be used to desrcribe all simple graded division algebras is as follows: 
\begin{itemize}
\item $\R$, 
\item $\C$ and its refinement $\C^{(2)}\cong\cg(1,0)$, 
\item $\H$, and its refinements $\H^{(2)}$ and  $\H^{(4)}\cong\cg(2,0)$, 
\item $M_2^{(2)}$ and its refinement $M_2^{(4)}\cong\cg(1,1)\cong\cg(0,2)$, 
\item $M_2^{(2)}\ot\C^{(2)}$ and its refinement $M_2^{(8)}\cong\cg(2,1)\cong\cg(0,3)$,
\item $M_2(\C,\ZZ_4)$ and its refinement $M_2^{(8)}$, weakly isomorphic to a non-homogeneous grading on $C(2,1)$ described in  Subsection \ref{ssogc},
\item $M_4^{(4)}\cong \H^{(4)}\ot\H$ and its refinements $ \H^{(4)}\ot\H^{(2)}$ and $ \H^{(4)}\ot\H^{(4)}\cong M_2^{(4)}\ot M_2^{(4)}\cong\cg(2,2)$,
%\item $M_4(\C,\ZZ_4\times\z^2)$ and its refinement $M_4^{(32)}$, weakly isomorphic to a non-homogeneous grading on $C(3,2)$ described in  Subsection \ref{ssogc}
\item $M(G,\beta)$, where $\beta$ is a non-singular skew-symmetric complex bicharacter on $G=H\times H$ (Pauli grading).
\end{itemize}

Our main goal is to prove the following.
\begin{theorem}\label{tgdar} Any division grading on a real simple algebra $M_n(D)$, $D$ a real division algebra, is  weakly isomorphic to one of the following types
\begin{enumerate}
\item[$D=\R:$]
\begin{enumerate}
\item[\rm (i)] $(M_2^{(4)})^{\ot k}\cong\cg(k,k)$;
\item[\rm (ii)] $M_2^{(2)}\otimes (M_2^{(4)})^{\ot (k-1)}$,  a coarsening of {\rm (i)};
\item[\rm (iii)]$M_4^{(4)}\ot(M_2^{(4)})^{\ot (k-2)}$,  a coarsening of {\rm (i)};
\end{enumerate}
\item[$D=\H:$]
\begin{enumerate}
\item[\rm (iv)] $\H^{(4)}\ot(M_2^{(4)})^{\ot k}\cong\cg(k+1,k-1)$;
\item[\rm (v)] $\H^{(2)}\otimes (M_2^{(4)})^{\ot k}$ a coarsening of {\rm (iv)};
\item[\rm (vi)] $\H\ot(M_2^{(4)})^{\ot k}$, a coarsening of {\rm (v)};
\end{enumerate}
\item[$D=\C:$]
\begin{enumerate}
\item[\rm (vii)] $\C^{(2)}\ot(M_2^{(4)})^{\ot k}\cong\cg(k+1,k)$,
\item[\rm (viii)] $\C^{(2)}\ot M_2^{(2)}\ot(M_2^{(4)})^{\ot (k-1)}$, a coarsening of {\rm (vi)};
\item[\rm (ix)] $M_2^{(8)}\ot (M_2^{(4)})^{\ot (k-1)}$,
\item[\rm (x)] $ M_2(\C,\ZZ_4)\ot(M_2^{(4)})^{\ot (k-1)}$, a coarsening of {\rm (ix)};
\item[\rm (xi)] $(M_2^{(4)})^{\ot k}\ot\C^{(2)}\ot\H$,
\item[\rm (xii)] $M_2^{(8)}\ot (M_2^{(4)})^{\ot (k-1)}\ot\H$
\item[\rm (xiii)] Pauli grading.
\end{enumerate}
\end{enumerate}
None of the gradings of different types or of the same type but with different values of $k$ is weakly isomorphic to the other. 
\end{theorem}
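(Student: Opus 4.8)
The plan is to prove the two assertions of the statement separately: that every division grading on a simple finite-dimensional real algebra is weakly isomorphic to one of (i)--(xiii), and that the gradings on the list are pairwise non--weakly-isomorphic. For the first assertion I would begin from Lemma~\ref{lsf}: the support is a finite abelian group $G$ with $\dim R=|G|\dim R_e$ and $R_e\in\{\R,\C,\H\}$. Fixing a nonzero homogeneous $x_g\in R_g$ for each $g$ and writing $\beta(g,h)=x_gx_hx_g^{-1}x_h^{-1}\in R_e^\times$, I would first dispose of two cases. If $R_e=\H$ and $R$ is central simple over $\R$ (that is, $D=\R$ or $\H$), the double centralizer theorem yields $R\cong\H\otimes_\R C_R(\H)$ with $\H$ trivially graded and $C_R(\H)$ a graded division algebra whose identity component is $Z(\H)=\R$, so we are reduced to $R_e=\R$ on the tensor cofactor. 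If $D=\C$ and the central copy $\C I$ is contained in $R_e$, then every $R_g$ is a $\C$-subspace, $\Gamma$ is a complex division grading, and by \cite{BZ03} it is weakly isomorphic to a Pauli grading, which is item (xiii).

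In all remaining situations the commutation factor $\beta(g,h)$ is an $o(g)$-th root of unity which is forced to lie in $\{\pm I\}$: over $\R$ the only roots of unity are $\pm1$, while in the case $D=\C$ one first notes that $\C I$ is a graded subalgebra, so $iI$ is homogeneous of some degree $z$ with $z^2=e$, and the same $2$-torsion dichotomy then pins the commutation factors down. Hence $\beta(g,h)^2=1$, so $g^2$ lies in the radical of $\beta$; this radical is exactly the set of degrees of central homogeneous elements, hence $\{e\}$ when $Z(R)=\R I$ and $\{e,z\}$ when $Z(R)=\C I$ with $iI$ of degree $z\neq e$. Therefore $G^2\subseteq\langle z\rangle$, which forces $G$ to be an elementary abelian $2$-group or $\ZZ_4\times(\text{elementary abelian }2\text{-group})$ with the $\ZZ_4$-part squaring onto $z$. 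I would then identify $\Gamma$ by producing graded generators and defining relations, following Subsection~\ref{ssgdr}. In the elementary abelian case, after scaling each $x_g$ so that $x_g^2=\pm I$ and using that an alternating $\FF_2$-form is determined by its rank, one finds a basis $g_1,\dots,g_N$ of $G$ whose representatives pairwise anticommute; the Clifford relations give a surjection $\cg(p,m)\twoheadrightarrow R$ which is an isomorphism since $\dim C(p,m)=2^N=|G|=\dim R$. Standard facts about real Clifford algebras --- that for fixed $p+m$ the graded algebra $\cg(p,m)$ depends up to weak isomorphism only on the Arf invariant of its $\FF_2$-quadratic form, equivalently only on the ungraded algebra $C(p,m)$ --- then collapse all these gradings onto $\cg(k,k)$, $\cg(k+1,k-1)$, $\cg(k+1,k)$, i.e.\ items (i), (iv), (vii). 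When $G$ has a $\ZZ_4$-factor, the order-$4$ homogeneous generator together with a pairwise-anticommuting basis of a complement matches relations of the type~(\ref{eq28p}) or~(\ref{eq432p}) and realizes $M_2^{(8)}$ (or its coarsening $M_2(\C,\ZZ_4)$) tensored with a homogeneous Clifford block, giving (ix), (x), (xii); again the natural surjection is an isomorphism by dimension count. Items (ii), (iii), (v), (vi), (viii), (xi) appear as the coarsenings obtained by quotienting by a subgroup (an order-$2$ subgroup, or $\langle z\rangle$) for which the new identity component is again one of $\R,\C,\H$. It is cleanest to run all of this as an induction on $\dim R$: peel off one building block --- a copy of $M_2^{(4)}$, or of $\C^{(2)}$, $\H^{(4)}$, $\H$, or the unique $\ZZ_4$-block --- as a graded tensor factor using the commuting-pair/double-centralizer structure, and apply the inductive hypothesis to its graded centralizer.

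For the second assertion I would exhibit invariants of the weak-isomorphism class and check that they separate all items. These are: the Wedderburn data $D$ and $\dim_\R R$ (invariants of the algebra); the isomorphism type of the algebra $R_e$ (a weak isomorphism carries the identity component to the identity component); the abstract group $G$ (a weak isomorphism comes equipped with an isomorphism $G\to G'$); and, to tell apart items that share all of these --- the key case being (viii) versus the Pauli grading (xiii) --- the binary datum ``$iI\in R_e$'' (a weak isomorphism fixes $Z(R)$ and sends $iI$ to $\pm iI$). Reading off the parameter $k$ from $\dim_\R R$ and the type, a direct inspection of the list then shows no two items coincide in all of $(D,\dim_\R R,R_e,G,\ iI\in R_e)$.

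The step I expect to be the main obstacle is the $\ZZ_4$-case of the first assertion: showing that as soon as $G$ contains an element of order $4$, the whole grading is weakly isomorphic to a single $M_2^{(8)}$- or $M_2(\C,\ZZ_4)$-block tensored with homogeneous Clifford blocks --- equivalently, that a ``non-homogeneous'' $\ZZ_4$-piece absorbs any further order-$4$ structure, the prototype being the weak isomorphism $M_4^{(32)}\cong M_2^{(8)}\otimes M_2^{(4)}$. This hinges on a careful choice of graded generators matching the presentations~(\ref{eq28p})--(\ref{eq432p}) and on exact dimension bookkeeping. Two secondary delicate points are the Clifford-theoretic input that $\cg(p,m)$ is a weak-isomorphism invariant of the pair $(p+m,\,C(p,m))$, and, in the case $D=\C$ with $R_e\cong\C$ not the scalar copy, checking that the grading group is still elementary abelian $2$ or $\ZZ_4\times(\text{elementary abelian }2)$ --- that is, ruling out higher and odd torsion, where a priori the commutation factors could be arbitrary roots of unity in $R_e\cong\C$.
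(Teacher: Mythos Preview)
Your plan follows essentially the same route as the paper: case analysis on $\dim R_e\in\{1,2,4\}$ and on whether $Z(R)\subset R_e$, reduction of $R_e\cong\H$ to $R_e\cong\R$ via the centralizer (this is Lemma~\ref{leh}), identification of the Pauli case when $\C I\subset R_e$, and then production of anticommuting graded generators matching a Clifford or $M_2^{(8)}$ presentation. Your non-isomorphism invariants --- $(D,\dim R,\ R_e,\ G,\ iI\in R_e)$ --- are exactly those the paper uses in the short argument following the statement of the theorem.

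There is, however, one genuine gap. Your claim that ``in all remaining situations $\beta(g,h)\in\{\pm I\}$'' is justified only when $R_e=\R$: when $R_e\cong\C$ the commutation factor $\beta(g,h)=x_gx_hx_g^{-1}x_h^{-1}$ lies in $R_e^\times\cong\C^\times$, and there is no a priori reason it should be real. This arises not only for $D=\C$ with non-scalar $R_e$ (which you flag as delicate) but also for $D\in\{\R,\H\}$ with $R_e\cong\C$ (types (ii) and (v)), which you do not flag. Worse, the preliminary claim that $\beta(g,h)^{o(g)}=1$ already breaks down here, since $x_g^{o(g)}\in R_e$ need not be central and hence need not commute with $x_h$, so one cannot simply iterate $x_gx_h=\beta(g,h)x_hx_g$. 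The paper's remedy (Subsection~\ref{ss2} for $D\in\{\R,\H\}$, and the $\dim R_e=2$ case of Section~\ref{sc} for $D=\C$) is to abandon the bicharacter viewpoint in this case and instead split $R=R^+\oplus R^-$ along the $\pm1$-eigenspaces of conjugation by $J\in R_e$ with $J^2=-I$: for homogeneous $b\in R^-$ and $a\in R^+$ one has $bab^{-1}=\lambda a$ with $\lambda\in R_e$, and since conjugation by $b$ acts on $R_e$ as complex conjugation, $b^2ab^{-2}=|\lambda|^2a$; then $|\lambda|^{2m}=1$ for suitable $m$ forces $|\lambda|=1$, whence $b^2\in Z(R)$. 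This is what pins $G$ down as elementary abelian $2$ (or $\ZZ_4\times\z^m$ when $Z(R)=\C I$ and some $b^2\in\langle iI\rangle$). With this modulus argument inserted --- and with Lemma~\ref{leh} applied also in the non-central case $R_e\cong\H$, $D=\C$ (Subsection~\ref{shc}), which your centralizer step covers only for $D\in\{\R,\H\}$ --- the rest of your outline, including the $\ZZ_4$-absorption via $M_4^{(32)}\cong M_2^{(8)}\otimes M_2^{(4)}$, coincides with the paper's proof.
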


\begin{remark}\label{rc}It is worth mentioning that Types (i) to (viii) are homogeneous Clifford gradings or their coarsenings while Types (ix) grading is a non-homogeneous Clifford grading and (x) is its coarsening.
\end{remark}

\begin{proof}Here we will only prove that none of the gradings on the list is weakly isomorphic to the other. In the following sections we will prove that every division grading is weakly isomorphic to one of the gradings on the list.

Clearly, there is no weak isomorphism of  $M_{n_1}(D_1)$ and $M_{n_1}(D_1)$ if $(n_1,D_1)$ and $(n_2,D_2)$ are different.  The next invariant is the dimension of the neutral component $R_e$. These invariants are sufficient to separate   algebras (i) to (vi) from other algebras on the list and from each other. As for gradings (vii) to (xiii), then Pauli gradings are the only ones where all elements of the center have degree $e$. In the remaining cases (vii) to (xii),  $\dim R_e=1$ in (vii) and (ix), $\dim R_e=2$ in cases (viii) and (x),  $\dim R_e=4$ in cases (viii) and (x). Finally, in cases (ix), (x) and (xii) the grading group has elements of order 4 while in (vii), (viii) and (xi) the grading group is an elementary abelian 2-group.
\end{proof}

\section{Two lemmas}\label{sstl}

Let $R$ be a $G$-graded division algebra over $\R$, $G$ a finite abelian group. We denote by $Z(R)$ the center of $R$. If $Z(R)=\R.I$ then $R$ is called central simple. Otherwise, $R\cong M_n(\C)$, for some $n\ge 1$, and $Z(R)=\C.I$. It is well-known that $Z(R)$ is a graded subalgebra of $R$. So $iI$ is always a homogeneous element but its degree does not need to be $e$.

The following lemmas are true.

\begin{lemma}\label{l1} 
If $\dim {R_e}=1$ then, for any homogeneous $u,v\in R$, we have $uv=\pm vu$. If also $\dim Z(R)=1$, then $G$ is an elementary abelian 2-group and one can choose $u_g$ in each $R_g$ so that $u_g^2=\pm I$.
\end{lemma}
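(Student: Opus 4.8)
The plan is to exploit that, by Lemma~\ref{lsf}(4) together with the standing convention that the support of $R$ equals $G$, every homogeneous component $R_g$ is one-dimensional over $\R$; moreover every nonzero homogeneous element is invertible (definition of graded division algebra) and $R_e=\R I$. These facts turn all the statements into short manipulations with scalars.

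For the first assertion I would argue as follows. Given nonzero homogeneous $u\in R_g$ and $v\in R_h$, commutativity of $G$ forces both $uv$ and $vu$ into the one-dimensional space $R_{gh}$, and both are nonzero since $u,v$ are invertible; hence $uv=\lambda vu$ for a unique $\lambda\in\R^\times$. A one-line induction gives $u^kv=\lambda^k vu^k$ for all $k$. Taking $k=n$, the order of $g$ in $G$ (finite, since $G$ is), we get $u^n\in R_{g^n}=R_e=\R I$, so $u^n$ is central, whence $u^nv=vu^n$ and therefore $\lambda^n=1$. Since $\pm 1$ are the only real roots of unity, $\lambda=\pm 1$, i.e.\ $uv=\pm vu$ (the case $u=0$ or $v=0$ being trivial). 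I expect this to be the one spot needing an idea rather than bookkeeping: it is precisely the finiteness of $G$ that pins the scalar $\lambda$ down to $\pm 1$.

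For the second assertion, assume in addition $Z(R)=\R I$. I would first show $g^2=e$ for every $g\in G$: fixing a nonzero $u\in R_g$, the relation $uw=\pm wu$ just proved (for every homogeneous $w$) yields $u^2w=wu^2$, so $u^2$ commutes with all homogeneous elements and hence with all of $R$, i.e.\ $u^2\in Z(R)=\R I$; but $u^2\in R_{g^2}$ is nonzero and distinct homogeneous components meet only in $0$, so $g^2=e$. Thus $G$ is an elementary abelian $2$-group. Finally, the same computation shows that for each $g$ one has $u^2=cI$ with $c\in\R^\times$, so setting $u_g:=|c|^{-1/2}u\in R_g$ gives $u_g^2=(c/|c|)I=\pm I$, which completes the argument. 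I do not anticipate any real obstacle beyond organizing these steps; the content is entirely elementary once one-dimensionality of the components and the hypothesis $Z(R)=\R I$ are in hand.
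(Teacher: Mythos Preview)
Your proof is correct and follows essentially the same approach as the paper: both arguments use one-dimensionality of homogeneous components to obtain $uv=\lambda vu$ with $\lambda\in\R^\times$, then exploit finiteness of $G$ (so that some power of $u$ lies in $R_e=\R I$) to force $\lambda=\pm1$; the deduction that $u^2\in Z(R)$ and hence $g^2=e$, together with the rescaling to achieve $u_g^2=\pm I$, matches the paper's argument as well.
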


\begin{proof} 
Indeed, for any $g\in G$ and any $u\in R_ g$, the mapping $x\mapsto uxu^{-1} $ is a graded automorphism of $R$.  It follows that for any homogeneous element $v\in R_h$ we have $uvu^{-1}\in R_h=\R v$. Hence $uvu^{-1} =\lambda v$, for a real number $\lambda$. Now since  $G$ is finite, there is natural $m$ such that $g^m=e$.  Then $u^m\in R_e$ so that $v=u^mvu^{-m} =\lambda^m v$.  As a result, $\lambda^m=1$,  hence $\lambda =\pm 1$, proving $uv=\pm vu$. It follows that $u^2\in Z(R)$, the center of $R$. We know that the center of an algebra is a graded subalgebra. Hence if $Z(R)$ is one-dimensional it must be equal to $R_e$. Thus $g^2=e$, for any $g\in G$, proving that $G$, indeed, is an elementary 2-group. Clearly then one can choose $u_g\in R_g$ so that $u^2_g=\pm I$.
\end{proof}

\begin{lemma}\label{leh} Let $\dim R_e=4$. Then $R\cong\H\ot S$, where $S$ is a graded division algebra with $\dim S_e=1$ so that $R_e=\H\ot 1$.
\end{lemma}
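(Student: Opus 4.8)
The plan is to first pin down the structure of the neutral component $R_e$ as an ungraded algebra, then find a copy of $\H$ sitting inside $R$ that is homogeneous (in fact concentrated in degree $e$), and finally show this copy splits off as a tensor factor. Since $\dim R_e = 4$ and $R_e$ is a division algebra over $\R$ by Lemma \ref{lsf}(2), we have $R_e \cong \H$. So there are homogeneous elements in $R_e$ playing the role of $i,j,k$; I will fix such a copy $\H \ot 1 := R_e \subset R$ and call its standard generators $q_1,q_2$ (with $q_1^2=q_2^2=-I$, $q_1q_2=-q_2q_1$). The goal is to produce a graded subalgebra $S$ with $\dim S_e = 1$ that centralizes $\H \ot 1$ and satisfies $R = (\H\ot 1)\cdot S$ with the product being a tensor decomposition.

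The key step is to build $S$ as (a graded version of) the centralizer $C_R(R_e)$ of $R_e$ in $R$. First I would check that $C_R(R_e)$ is a graded subalgebra: for $g\in G$ and $0\neq u\in R_g$, conjugation $x\mapsto uxu^{-1}$ is a graded automorphism of $R$ (as in the proof of Lemma \ref{l1}), so it permutes the homogeneous components and in particular maps $R_e$ to $R_e$; hence $uR_eu^{-1}=R_e$, and the assignment $u\mapsto (\text{conj. by }u)|_{R_e}$ gives a homomorphism $G \to \Aut(R_e)=\Aut(\H)\cong\SO(3)$. Now here is the crucial observation: $\Aut(\H)$ has no nontrivial elements of finite order that are \emph{inner on} $R$ while acting as... — more precisely, for a homogeneous $u\in R_g$, if $m$ is such that $g^m = e$ then $u^m\in R_e=\H$, and conjugation by $u^m$ is conjugation by an element of $\H$, i.e.\ an inner automorphism of $\H$; but conjugation by $u$ restricted to $R_e$, raised to the $m$-th power, equals conjugation by $u^m$. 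One deduces (using that $\H$ is central over $\R$, so $\Aut(\H)$ is exactly the group of inner automorphisms, and analyzing orders) that in fact each $R_g$ contains an element $u_g$ centralizing $R_e$ --- by correcting an arbitrary homogeneous element of $R_g$ by a suitable unit of $\H$ inside $R_e$. Concretely: given $0\neq u\in R_g$, conjugation by $u$ is an inner automorphism of $\H$, say conjugation by $a\in\H^\times=R_e^\times$; then $u_g := a^{-1}u$ lies in $R_g$ and centralizes $R_e$. This shows $C_R(R_e)\cap R_g \neq 0$ for every $g$, so $S := C_R(R_e)$ is a graded subspace with $\dim S_g \geq 1$ for all $g\in \Supp\Gamma = G$; since $S$ is closed under multiplication and contains $I$, it is a graded subalgebra, and $S_e = C_{R_e}(R_e) = Z(R_e) = \R I$ because $R_e\cong\H$ has one-dimensional center. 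Thus $\dim S_e = 1$.

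It remains to assemble the tensor decomposition. The double centralizer / dimension count does it: $\H\ot 1 = R_e$ and $S$ are graded subalgebras that centralize each other, $R_e \cap S = R_e \cap Z(R_e) = \R I$, and the multiplication map $R_e \ot_\R S \to R$ is an algebra homomorphism (the two factors commute). Its image is a graded subalgebra containing $R_e$ and $S$; comparing dimensions, $\dim(R_e\ot S) = 4\cdot\dim S = 4\cdot|G| = \dim R_e\cdot|G| = \dim R$ by Lemma \ref{lsf}(4) (using $\dim S_g=1$ for all $g$, hence $\dim S = |G|$, and $\dim R = |G|\dim R_e = 4|G|$). Since $R_e\ot S$ and $R$ have equal finite dimension and the map is a (graded) algebra homomorphism of a $G$-graded algebra onto $R$ whose restriction to each $S_g$ is injective, it is bijective; one checks it is injective directly (if $\sum q_\nu \ot s_\nu \mapsto 0$ with $q_\nu$ a basis of $\H$ and $s_\nu\in S$, multiply on the left by elements of $\H$ and use centrality of $\H$ over $\R$ to force each $s_\nu = 0$). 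Hence $R \cong \H \ot S$ as $G$-graded algebras with $\H = \H\ot 1 = R_e$ and $\dim S_e = 1$, as claimed.

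\medskip
\noindent\emph{Main obstacle.} The delicate point is the middle step: showing that every homogeneous component $R_g$ meets the centralizer $C_R(R_e)$ nontrivially, i.e.\ that the action $G\to\Aut(\H)$ lifts to a choice of \emph{homogeneous} centralizing elements. This works because $\Aut(\H)$ consists only of inner automorphisms ($\H$ is central over $\R$), so conjugation by any homogeneous $u$ is conjugation by some $a\in R_e$, and then $a^{-1}u$ does the job --- but one must be careful that $a^{-1}u$ is still a nonzero homogeneous element of the right degree, which is immediate since $a\in R_e$. No order/obstruction subtleties actually survive once one invokes centrality of $\H$; the only thing to get right is that one is not accidentally landing back in $R_e$ or killing the element. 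Everything after that is linear algebra and the double-centralizer dimension count.
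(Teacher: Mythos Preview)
Your proof is correct and follows essentially the same approach as the paper: the central idea in both is that every automorphism of $\H$ is inner, so for any nonzero homogeneous $u\in R_g$ one can correct by an element of $R_e\cong\H$ to obtain $v_g=a^{-1}u\in R_g$ centralizing $R_e$. The only cosmetic difference is that the paper defines $S$ as the $\R$-span of the chosen $v_g$'s and checks closure via $v_gv_h=\alpha(g,h)v_{gh}$ with $\alpha(g,h)\in C_{R_e}(R_e)=\R$, whereas you define $S=C_R(R_e)$ intrinsically; these coincide once you observe (as you implicitly use) that $S$ is itself a graded division algebra with $S_e=\R I$, forcing $\dim S_g=1$ for all $g$.
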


\begin{proof} 
Let $u_g\in R_g$. We have $R_g=R_eu_g$. Consider the map $x\mapsto u_gxu_g^{-1}$, where $x\in R_e$. This is an automorphism of $R_e$. All automorphisms of the quaternion algebra over $\R$ are inner (see \cite[Theorem 1.16]{BR}), hence there is $q_g\in R_e$ such that $u_gxu_g^{-1}=q_gxq_g^{-1}$, for all $x\in R_e$. Setting 
$v_g=q_g^{-1}u_g$ we are able to find representatives in each $R_g$ which commute with every element in $R_e$.  Now $v_gv_h\in R_{gh}$ hence $v_gv_h=\alpha(g,h)v_{gh}$, for all $g,h\in G$ and some $\alpha(g,h)\in R_e$. Since all $v_g,v_h,v_{gh}$ are in the centralizer of $R_e$, the same is true for $\alpha(g,h)$. Hence all $\alpha(g,h)$ are real numbers. As a result, the real linear span $S$ of all $v_g$ is a real subalgebra of $R$. By comparison of dimensionas, we observe that $R\cong\H\ot S$. The subalgebra $S$ is simple and graded. Each homogeneous component of $S$ is finite-dimensional so that $S$ is a graded divisional algebra with $\dim S_e=1$.
\end{proof}

\section{Central simple graded division algebras}\label{scsg}
These are $R=M_n(\R)$ and $M_n(\H)$.
\subsection{Case $\dim R_e=1$}\label{ssrr}
Let us view $G$ as a vector space over $\z$ and define a bilinear (skew)symmetric form $\beta$ on $G$ with values in $\z$ from the formula $u_gu_h=(-1)^{\beta(g,h)}u_hu_g$. This splits $G$ as an orthogonal product $G=G_1\times G_2\times\cdots G_k\times \Ker\beta$, where each $G_s$ is isomorphic to $\zz=(a_s)_2\times(b_s)$, $i=1,\ld,k$. Now if $g\in \Ker\beta$ then $u_g$ commutes with all other $u_h$. Since $R$ has only trivial center, we have $g=e$. Thus $G=G_1\times G_2\times\cdots G_k$. If $R_s=\bigoplus_{g\in G_s} R_g$, $s=1,\ld,k$ then each $R_s$ is a 4-dimensional subalgebra whose elements commute with the elements of all $G_r$ where $r\ne s$. Each $R_s$ is generated by homogeneous $e_s=u_{a_s}$ and $f_s=u_{b_s}$ such that $e_sf_s=-f_se_s$ and $e_s^2, f_s^2=\pm I$. Thus each $R_s$ is $\cg(p,m)$, with $p+m=2$. One easily checks that $\cg(2,0)\cong\H$, with the grading $H_\alpha=\langle i\rangle$, $H_\beta=\langle i\rangle$, $H_{\alpha\beta}=\langle k\rangle$.

\begin{remark}\label{rca}At the same time, there is a regular procedure (see \cite[Section 6.3]{G}) which allows one to switch from arbitrary Clifford algebra $C(p,m)$ with $p+m$ even to the tensor product of 4-dimensional algebras. During this procedure homogeneous elements are mapped to homogeneous ones. If we reverse this procedure, we may switch from the tensor product of Clifford gradings $\cg(p,m)$, where $p+m=2$ to one single fine Clifford grading $\cg(p,m)$. For different values of $p,m$, these gradings are pairvise isometrically inequivalent. However, if we do not require isometric equivalence, then for each $n=2k$ we have only two types of fine Clifford gradings $\cg(p,m)$, where $p+m=n$. These are $\cg(k,k)\cong M_{2^k}(\R)$ and $\cg(k+1,k-1)\cong M_{2^{k-1}}(\H)$ (see \cite[Corollary 6.3.1]{G}).
\end{remark}

As an illustration, consider $R=\cg(1,1)\ot\cg(0,2)$. We have that $R$ is given by presentation $(e_1,e_2,f_1,f_2\:|\:e_1^2=-1, e_2^2=f_1^2=f_2^2=1;e_1e_2=-e_2e_1, e_1f_1=f_1e_1,e_1f_2=f_2e_1,e_2f_1=f_1e_2,f_1f_2=-f_2f_1)$. Let us consider $e_3=e_1e_2f_1$, $e_4=e_1e_2f_4$. Then $e_3^2=e_4^2=1$, $e_3e_4=-e_4e_3$. Finally, $e_1e_3=e_1e_1e_2f_1=-e_2f_1$, while $e_3e_1=e_1e_2f_1e_1=e_2f_1$. Similarly, $e_1e_4=-e_4e_1$, 
$e_2e_3=-e_3e_2$ and $e_2e_4=-e_4e_2$. $R$ is now given by presentation  $(e_1,e_2,e_3,e_4\:|\:e_1^2=-1, e_2^2=e_3^2=e_4^2=1;e_1e_2=-e_2e_1, e_1e_3=-e_3e_1,e_1e_4=e_4e_1,e_2e_3=e_3e_2,e_3e_4=-e_4e_3)$.If $\deg e_1=\alpha$, $\deg e_2=\beta$, $\deg f_1=\gamma$, $\deg f_2=\delta$, with all group elements linearly independent,  then $\deg e_1=\alpha$, $\deg e_2=\beta$, $\deg e_3=\alpha\beta\gamma$, $\deg e_4=\alpha\beta\delta$, all group elements linearly independent. So we obtain $R=\cg(1,3)$. As an algebra, $R$ is isomorphic to $M_4(\R)$.

In the conclusion of this subsection, we have the following.
\begin{proposition}\label{pssrr} Any fine division grading on a central simple algebra is equivalent to either $\cg(k,k)\cong M_{2^k}(\R)$ or $\cg(k+1,k-1)\cong M_{2^{k-1}}(\H)$. These algebras also can be written as tensor product gradings $(M_2^{(4)})^{\ot k}$ or $(M_2^{(4)})^{\ot (k-1)}\ot\H^{(4)}$. Their graded presentations in terms of generators and defining relations are 
$$
(x_1,\ld,x_{2k}\;|\;x_p^2=-1,x_q^2=1, x_ux_v=-x_vx_u)
$$
where $1\le p\le k$, $k+1\le q\le 2k$, $1\le u< v\le 2k$,
and
$$
(x_1,\ld,x_{2k};|\;x_p^2=-1, x_q^2=-1, x_ux_v=-x_vx_u),
$$
where $1\le p\le k+1$, $k+2\le q\le 2k$, $1\le u< v\le 2k$. In both presentations, the grading group $G$ is an elementary abelian 2-group, $\deg x_s=\alpha_s\in G$ and $\{\alpha_1,\ld,\alpha_{2k}\}$ is a basis of $G$.
\end{proposition}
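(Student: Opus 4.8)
Most of what is needed is already contained in this subsection: for a division grading with $\dim R_e=1$ on a central simple $R$ one obtains --- via Lemma~\ref{l1} together with the orthogonal decomposition of $G$ under the commutation form $\beta$, which is non-singular because $Z(R)=\R I$ --- a decomposition $R\cong R_1\ot\cdots\ot R_k$ as a tensor product of $G$-graded algebras in which each $R_s$ is $4$-dimensional, generated by two anticommuting homogeneous elements with squares $\pm I$, hence weakly isomorphic to $\cg(2,0)\cong\H^{(4)}$ or to $\cg(1,1)\cong\cg(0,2)\cong M_2^{(4)}$. My first step would be to record that the hypothesis ``fine'' is here equivalent to $\dim R_e=1$: if $\dim R_e=1$ all homogeneous components are one-dimensional, so the grading admits no proper refinement; if $\dim R_e=4$ then Lemma~\ref{leh} gives $R\cong\H\ot S$ with the $\H$-factor trivially graded, and refining that factor to $\H^{(4)}$ yields a proper refinement that is still a division grading, contradicting fineness; the remaining case $R_e\cong\C$ is excluded by a similar refinement argument (a non-central homogeneous square root of $-I$ of degree $e$ can be split off). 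Thus we may assume the tensor decomposition above.

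The main step would then be to collapse this tensor product using the ``regular procedure'' of \cite[Section~6.3]{G} recalled in Remark~\ref{rca}: since each factor $R_s\cong\cg(p_s,m_s)$ has $p_s+m_s=2$, the product $R$ is weakly isomorphic to a single fine Clifford grading $\cg(p,m)$ with $p+m=2k$, and by that remark the only such gradings, up to weak isomorphism, are $\cg(k,k)\cong M_{2^k}(\R)$ and $\cg(k+1,k-1)\cong M_{2^{k-1}}(\H)$. Which of the two occurs is pinned down by the isomorphism type of $R$ as an ungraded algebra, which --- being central simple with $\dim R=|G|=4^k$ --- is $M_{2^k}(\R)$ or $M_{2^{k-1}}(\H)$. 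The same remark, applied to $(M_2^{(4)})^{\ot k}\cong\cg(1,1)^{\ot k}$ and to $\H^{(4)}\ot(M_2^{(4)})^{\ot(k-1)}\cong\cg(2,0)\ot\cg(1,1)^{\ot(k-1)}$, identifies these two tensor-product gradings --- via their ungraded algebra types $M_{2^k}(\R)$ and $M_{2^{k-1}}(\H)$ --- with $\cg(k,k)$ and $\cg(k+1,k-1)$ respectively. Hence any fine division grading on a central simple algebra is weakly isomorphic to one of $(M_2^{(4)})^{\ot k}\cong\cg(k,k)$ or $\H^{(4)}\ot(M_2^{(4)})^{\ot(k-1)}\cong\cg(k+1,k-1)$.

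It then remains to note that the displayed presentations are simply the defining presentations of $\cg(k,k)$ and $\cg(k+1,k-1)$ in terms of an orthogonal basis $e_1,\ldots,e_{2k}$ of the underlying quadratic space: the $e_i$ pairwise anticommute, $e_i^2=-I$ for the first $k$ (resp.\ $k+1$) of the indices and $e_i^2=I$ for the rest, and their degrees $\alpha_i=\deg e_i$, which are linearly independent over $\z$ precisely because the grading is fine, form a basis of $G$. The step I expect to need genuine care is the appeal to Remark~\ref{rca}/\cite[Section~6.3]{G}: one has to reconcile the tensor-product grading, in which generators of distinct factors \emph{commute}, with the ``all generators anticommute'' Clifford presentation, through a homogeneity-preserving change of variables --- the example $\cg(1,1)\ot\cg(0,2)\cong\cg(1,3)$ worked out just before the statement being a model for this --- while checking that this change of variables stays within the weak-isomorphism class of the grading, and that the preliminary reduction to $\dim R_e=1$ is airtight.
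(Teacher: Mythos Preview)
Your proposal is correct and follows essentially the same approach as the paper: the argument for Proposition~\ref{pssrr} is precisely the content of Subsection~\ref{ssrr} preceding it (the orthogonal decomposition of $G$ via the commutation form $\beta$, the resulting tensor factorization into $4$-dimensional Clifford factors, and the appeal to Remark~\ref{rca} for the collapse to a single $\cg(p,m)$). The one thing you add is the explicit reduction ``fine $\Rightarrow\dim R_e=1$'', which the paper leaves implicit by placing the proposition inside the subsection devoted to that case and handling $\dim R_e=2,4$ separately in Subsections~\ref{ss4} and~\ref{ss2}, where those gradings are shown to be coarsenings.
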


\subsection{Case $\dim R_e=4$}\label{ss4} If $\dim R_e=4$ then by Lemma \ref{leh}, $R\cong\H\ot S$ where $S$ is a central simple graded division algebra with $\dim S_e=1$. Thus we need only to deal with the case $\dim R_e=1$. By Lemma \ref{l1}, in this case $G$ is an elementary abelian 2-group, $R_g=\R.u_g$, for each $g\in G$, $u_gu_h=\pm u_hu_g$, for all $g\neq h$ and $u_g^2=\pm I$. We can apply Proposition \ref{pssrr} to $S$. We obtain either $S\cong (M_2^{(4)})^{\ot k}$ or $S\cong (M_2^{(4)})^{\ot (k-1)}\ot \H^{(4)}$. Then $R\cong (M_2^{(4)})^{\ot k}\ot\H$ or $R\cong (M_2^{(4)})^{\ot k}\ot (\H^{(4)}\ot\H)\cong(M_2^{(4)})^{\ot k}\ot M_4^{(4)}$ . 

\subsection{Case $\dim R_e=2$}\label{ss2}
%\subsection{Case $(\C,\R)$ and $(\C,\H)$ }
Let $R=M_k(D)$, where $D\cong\R$ or $D\cong\H$. We have that $Q=R_e=\langle I, J\rangle\cong\C$, where $J^2=-I$ is noncentral. We set $Z^+=\{ x\in R\;\vert\; xJ=Jx\}$ and $Z^-=\{ x\in R\;\vert\; xJ=-Jx\}$. These are graded subspaces. We also have $xZ^+=Z^-$ and $xZ^-=Z^+$, for any $x\in Z^-$ and $R=Z^+\oplus Z^-$. Let $H=\supp Z^+$. Then $H$ is a subgroup of $G$ and $[G:H]=2$. Let $x\in Z^-$ and $y\in Z^+$ be homogeneous elements. Then $xyx^{-1}=\lambda y$, $\lambda\in Q$. The conjugation by $x$ on $Q$ is a nontrivial automorphism of $Q\cong\C$, so this is usual complex conjugation, that is,  $x\lambda=\overline{\lambda}x$. It follows that $x^2yx^{-2}=\lambda\overline{\lambda}y=|\lambda|^2y$. As before, there is $m$ such that $x^{2m}yx^{-2m}=y$. In this case, $|\lambda|^{2m}=1$ so that $|\lambda|=1$ and $x^2yx^{-2}=y$. Therefore, $x^2$ commutes with $Z^+$. Since $Z^-=xZ^+$, it follows that $x$ commutes with $Z^-$ hence with the whole of $Z$. Therefore, $x^2=\alpha I$, where $\alpha\in \R$.
As a result, we must conclude that $g^2=e$. Since this is true for any $g\in G\setminus H$, the group $G$ is an elementary abelian 2-group.

Next, if there is a homogeneous $x\in Z^-$ such that $x^2=I$ then $(Jx)^2=I$ and then
$$
A=\langle  I,J,x,Jx\rangle \cong  M_2(\R)\mbox{ with }  M_2^{(2)}\mbox{ grading  }
$$
If there is $x\in Z^-$ such that $x^2=-I$ then $(Jx)^2=-I$ and
$$
A=\langle  I,J,x,Jx\rangle \cong  \H\mbox{ with }  \H^{(2)}\mbox{ grading  }
$$
In any case, $R\cong A\otimes B$, where $B$ is the centralizer of $A$ in $R$ (since $R$ and $A$ are central simple) and $B$ is the tensor product of algebras $M_2^{(4)}$ or $\H^{(4)}$. As before, we can assume that no more than one factor is $\H^{(4)}$. Now the fllowing is true

\begin{lemma}\label{lhm} $A=M_2^{(2)}\ot\H^{(4)}$ is weakly isomorphic to $B=\H^{(2)}\ot M_2^{(4)}$.
\end{lemma}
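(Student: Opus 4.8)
The plan is to use the method of graded generators and defining relations of Subsection~\ref{ssgdr}. Combining the graded presentations of the tensor factors, $A=M_2^{(2)}\ot\H^{(4)}$ is presented by generators $x_1,x_2,y_1,y_2$ subject to $x_1^2=-I$, $x_2^2=I$, $y_1^2=y_2^2=-I$, $x_1x_2=-x_2x_1$, $y_1y_2=-y_2y_1$, $x_iy_j=y_jx_i$, with $\deg x_1=e$, $\deg x_2=\alpha$, $\deg y_1=\beta$, $\deg y_2=\gamma$ and $G=(\alpha)_2\times(\beta)_2\times(\gamma)_2$; here $x_1$ generates $(M_2^{(2)})_e\cong\C$ and $x_2$ is a suitably normalized element of $(M_2^{(2)})_\alpha$. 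In the same way $B=\H^{(2)}\ot M_2^{(4)}$ is presented by $z_1,z_2,w_1,w_2$ with $z_1^2=z_2^2=-I$, $w_1^2=w_2^2=I$, and the analogous anticommutation and cross-commutation relations, the degrees being $e,\alpha',\beta',\gamma'$ in $G'=(\alpha')_2\times(\beta')_2\times(\gamma')_2$. So the two presentations differ only in which generators square to $+I$ and which to $-I$ (pattern $(-,+,-,-)$ versus $(-,-,+,+)$), together with the labelling of degrees.

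Next I would exhibit a graded change of generators inside $A$ turning its presentation into that of $B$. The candidate is
\[
u_1=x_1,\qquad u_2=x_2y_1y_2,\qquad v_1=x_1y_1,\qquad v_2=x_1y_2.
\]
Because $x_1$ commutes with $y_1$ and $y_2$ while all three square to $-I$, one gets $v_1^2=x_1^2y_1^2=I$, similarly $v_2^2=I$, and $u_2^2=x_2^2(y_1y_2)^2=-I$, while $u_1^2=-I$ trivially; short computations from the relations in $A$ then give $u_1u_2=-u_2u_1$, $v_1v_2=-v_2v_1$ and $u_iv_j=v_ju_i$ for $i,j\in\{1,2\}$. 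Since $x_1=u_1$, $y_1=-u_1v_1$, $y_2=-u_1v_2$ and $x_2=u_2y_2y_1$, the elements $u_1,u_2,v_1,v_2$ generate $A$; as the algebra with the same presentation as $B$ has dimension at most $16=\dim A$, these elements furnish that very presentation of $A$, now with $\deg u_1=e$, $\deg u_2=\alpha\beta\gamma$, $\deg v_1=\beta$, $\deg v_2=\gamma$.

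Finally, since $\alpha=(\alpha\beta\gamma)\beta\gamma$, the triple $\{\alpha\beta\gamma,\beta,\gamma\}$ still generates $G\cong\z^3$, hence is a basis, so the assignment $\alpha'\mapsto\alpha\beta\gamma$, $\beta'\mapsto\beta$, $\gamma'\mapsto\gamma$ extends to an isomorphism $G'\to G$; by the principle recalled in Subsection~\ref{ssgdr}, the correspondence $z_1\mapsto u_1$, $z_2\mapsto u_2$, $w_1\mapsto v_1$, $w_2\mapsto v_2$ together with this group isomorphism is a weak isomorphism of $B$ onto $A$. The only genuine obstacle is guessing the right substitution: once one notices that $x_1$ is a square root of $-I$ commuting with $y_1$ and $y_2$, everything reduces to routine sign-bookkeeping, and the fact that $\deg u_2$ comes out as $\alpha\beta\gamma$ rather than $\alpha$ is immaterial for a weak isomorphism.
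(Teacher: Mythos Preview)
Your proof is correct and follows the same overall strategy as the paper's: write graded presentations for $A$ and $B$, perform a homogeneous change of generators, and match the results via a group automorphism. The only difference is the particular substitution chosen. The paper applies the ``Clifford trick'' $y_j\mapsto x_1x_2y_j$ simultaneously to the presentations of both $A$ and $B$, turning each into a set of four pairwise anticommuting generators, after which the two presentations visibly differ only by swapping the second and third generators. Your route is more direct: you stay inside $A$ and produce in one step the substitution $u_1=x_1$, $u_2=x_2y_1y_2$, $v_1=x_1y_1$, $v_2=x_1y_2$ that lands exactly on a presentation of $B$ (you use the generators of $M_2^{(4)}$ with both squares $+I$, whereas the paper happens to pick one with square $-I$; both choices are legitimate). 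The paper's detour through the Clifford form has the mild advantage of being systematic and reusable, while your argument is shorter and requires no intermediate form; mathematically they are the same proof.
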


\begin{proof} We can write the graded presentations for both algebras. For $A$ we have
\begin{eqnarray*}
A=(x_1,x_2,y_1,y_2\;|\; x_1^2=-1,x_2^2=1,y_1^2=y_2^2=-1,\\x_1x_2=-x_2x_1,y_1y_2=-y_2y_1,x_ky_\ell=y_\ell x_k,1\le k,\ell\le 2)
\end{eqnarray*}
For $B$ we have
\begin{eqnarray*}
B=(x_1,x_2,y_1,y_2\;|\; x_1^2=x_2^2=y_1^2=-1,y_2^2=1,\\x_1x_2=-x_2x_1,y_1y_2=-y_2y_1,x_ky_\ell=y_\ell x_k,1\le k,\ell\le 2)
\end{eqnarray*}
In both presentations, $\deg x_1=e$, $\deg x_2=\alpha$, $\deg y_1=\beta$, $\deg y_2=\gamma$. 

Let us replace in both presentations $y_1$ by $x_1x_2y_1$ and $y_2$ by $x_1x_2y_2$. This is a standard transformation applied when we want to switch from the tensor product of two Clifford algebras to one Clifford algebra. The elements obtained have degrees $\alpha\beta$ and $\alpha\gamma$, respectively. We denote these elements in the both presentations by $x_3,x_4$. 
One easily checks that the new graded presentations are as follows.
For $A$ we have
\begin{eqnarray*}
A=(x_1,x_2,x_3,x_4\;|\; x_1^2=-1,x_2^2=1,x_3^2=x_4^2=-1,x_kx_\ell=-x_\ell x_k, 1\le k<\ell\le 4)
\end{eqnarray*}
For $B$ we have
\begin{eqnarray*}
B=(x_1,x_2,y_1,y_2\;|\; x_1^2=x_2^2=-1,x_3^2=1,x_4^2=-1,x_kx_\ell=-x_\ell x_k, 1\le k<\ell\le 4)
\end{eqnarray*}
If we map $x_1 \to x_1$, $x_2 \to x_3$, $x_3 \to x_2$ and $x_4 \to x_4$ then $A$ is mapped isomorphically to $B$. To have weak isomorphism we must make sure that the degrees of the above elements are mapped by a group isomorphism. We have $e\to e$, $\alpha\to\alpha\beta$, $\alpha\beta\to \alpha$ and $\alpha\gamma\to \alpha\gamma$. Such mapping can be achieved by an automorphism mapping $\alpha\to\alpha\beta$, $\beta\to\beta$ and $\gamma\to\beta\gamma$.
\end{proof}

As a result, we have the following
\begin{proposition}\label{pcgda2} A division grading on a central simple $R=M_n(D)$ with 2-dimensional homogeneous components exists if and only if $n=2^k$. Up to weak isomorphism, such a grading is weakly equivalent to one of the following types
\begin{enumerate}
\item[$\mathrm{(i)}$] $M_2^{(2)}\otimes (M_2^{(4)})^{\ot k}$,
\item[$\mathrm{(ii)}$] $M_2^{(2)}\otimes (M_2^{(4)})^{\ot k}\otimes \H^{(4)}$.
\end{enumerate}
for some natural $k\ge 0$. All of them appear as coarsening of the gradings with one-dimensional components as in Proposition \ref{pssrr}.
\end{proposition}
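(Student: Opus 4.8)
The plan is to carve a four-dimensional block $A$ out of $R$, split it off as a tensor factor by the double centralizer theorem, reduce the complement to a grading covered by Proposition~\ref{pssrr}, and normalize using Lemma~\ref{lhm}. First I would observe that $R_e$, being a two-dimensional real division algebra (Lemma~\ref{lsf}(2)), is $\cong\C$, so $R_e=\langle I,J\rangle$ with $J^2=-I$; since $R$ is central simple, $J$ is not central, and conjugation by $J$ is an involutive graded automorphism, so $R$ splits $G$-homogeneously as $R=Z^+\oplus Z^-$ into the centralizer $Z^+$ of $J$ and its anticommutant $Z^-\neq 0$. One checks that $Z^+$ is a graded subalgebra and that $H\bydef\Supp Z^+$ is a subgroup of $G$ of index $2$ with $\Supp Z^-$ its nontrivial coset; in particular a homogeneous element centralizes $J$ if its degree lies in $H$ and anticommutes with $J$ otherwise.

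The heart of the matter is to show that $G$ is an elementary abelian $2$-group. For a homogeneous $x$ with $\deg x\notin H$ and any homogeneous $y\in R_h$, conjugation by $x$ preserves $R_h$, so $xyx^{-1}=\lambda y$ with $\lambda\in R_e$; since conjugation by $x$ sends $J$ to $-J$, it restricts to complex conjugation on $R_e\cong\C$, whence $x\lambda=\bar\lambda x$ and $x^2yx^{-2}=|\lambda|^2y$. Thus conjugation by $x^2$ scales each $R_h$ by a positive real $r_h$ (independent of the chosen $y\in R_h$), and multiplicativity makes $h\mapsto r_h$ a homomorphism from the finite group $G$ into $\R_{>0}$, hence trivial. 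So conjugation by $x^2$ is the identity, $x^2\in Z(R)=\R I$, and $(\deg x)^2=e$; as every nontrivial coset representative of $H$ squares to $e$ and $G$ is abelian, every element of $G$ squares to $e$.

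Then I would split off the block. Rescaling $x$ so that $x^2=\pm I$, put $A=\langle I,J,x,Jx\rangle=R_e\oplus R_{\deg x}$, a four-dimensional central simple graded division subalgebra (it is closed under multiplication since $(\deg x)^2=e$); inspecting the relations $J^2=-I$, $x^2=(Jx)^2=\pm I$, $xJ=-Jx$ identifies $A$, with its $\z$-grading $A_e=\langle I,J\rangle$, $A_{\deg x}=\langle x,Jx\rangle$, as $M_2^{(2)}$ when $x^2=I$ and as $\H^{(2)}$ when $x^2=-I$ (in both cases $A_e\cong\C$). Since $A$ and $R$ are central simple, the double centralizer theorem gives a graded isomorphism $R\cong A\ot B$ with $B=C_R(A)$ a graded central simple subalgebra; comparing identity components, $2=\dim R_e\geq\dim(A_e\otimes B_e)=2\dim B_e$ forces $\dim B_e=1$. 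Hence Proposition~\ref{pssrr} applies to $B$, giving $B\cong(M_2^{(4)})^{\ot k}$ or $B\cong(M_2^{(4)})^{\ot(k-1)}\ot\H^{(4)}$ up to weak isomorphism.

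Finally I would normalize and read off the restriction on $n$. So $R$ is a tensor product of one of $M_2^{(2)},\H^{(2)}$ with several copies of $M_2^{(4)}$ and at most one copy of $\H^{(4)}$; the weak isomorphism $\H^{(4)}\ot\H^{(4)}\cong M_2^{(4)}\ot M_2^{(4)}$ keeps at most one $\H^{(4)}$ factor, and Lemma~\ref{lhm} lets one trade an $\H^{(2)}$ factor for an $M_2^{(2)}$ factor at the cost of one more $\H^{(4)}$, placing $R$ in type (i) or type (ii). As an algebra $R$ is then a tensor product of copies of $M_2(\R)$ and $\H$, hence $\cong M_{2^k}(\R)$ or $\cong M_{2^k}(\H)$ according to the parity of the number of quaternion factors, so a division grading with two-dimensional components forces $n=2^k$; conversely, since $M_2^{(2)}$ (resp.\ $\H^{(2)}$) is a coarsening of $M_2^{(4)}$ (resp.\ $\H^{(4)}$), coarsening one factor of a Proposition~\ref{pssrr} grading produces such a grading on each $M_{2^k}(D)$, which also gives the final assertion of the statement. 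I expect the only genuinely delicate step to be the proof that $G$ is elementary abelian of exponent $2$ (the computation forcing $|\lambda|=1$); the block identification, the bookkeeping $\dim B_e=1$, and the normalization via Lemma~\ref{lhm} are then routine.
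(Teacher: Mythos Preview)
Your proposal is correct and follows essentially the same route as the paper: decompose $R=Z^+\oplus Z^-$ using conjugation by $J$, prove $G$ is an elementary abelian $2$-group via the computation $x^2yx^{-2}=|\lambda|^2y$, carve out the $4$-dimensional block $A\in\{M_2^{(2)},\H^{(2)}\}$, split $R\cong A\otimes B$ by the double centralizer theorem, apply Proposition~\ref{pssrr} to $B$, and normalize with Lemma~\ref{lhm}. The only cosmetic difference is that you conclude $|\lambda|^2=1$ by packaging $h\mapsto r_h$ as a homomorphism from the finite group $G$ into the torsion-free group $\R_{>0}$, whereas the paper reaches the same conclusion by choosing $m$ with $x^{2m}\in R_e$ and noting $|\lambda|^{2m}=1$; and you make explicit the bookkeeping $\dim B_e=1$, which the paper leaves implicit.
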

\section{Gradings on non-central simple algebras}\label{sc}
In this section, we deal with the case $R=M_n(\C)$.
\subsection{Case $\dim R_e=1$}
In this case the center $Z(R)$ is 2-dimensional and isomorphic to $\C$. This is the case where $R=M_n(\C)$ and $R_e=\R I$. It follows that there is an element $a\in G$ of order 2 such that $Z(R)=(\R I)\oplus (\R iI)$ where the degree of $iI$ is equal to $a$. Set $H_1=(a)$. Now consider any homogeneous $u\in R_g$, for some $g\in G$. Then as in the proof of Lemma \ref{l1} $u^2\in Z(R)$. Since $u^2$ is homogeneous, we either have $u^2=\alpha I\in \R$ or $u^2=\alpha iI$, again with $\alpha\in \R$. Clearly, $u^4\in\R I$ and hence $g^4=e$.

If all $u^2$ are in $\R I$ then $g^2=e$, for all $g\in G$, meaning that $G$ is an elementary abelian 2-group. We can then write $G=H_1\times  H_2$, for some subgroup $H_2$ of $G$. Consider $B=\bigoplus_{h\in H_2}R_h$. Clearly, this is a graded division algebra, with $B_e=\R I$ and $Z(B)=\R I$. In this case, we can apply the argument of the previous cases and conclude that $R\cong\C^{(2)}\otimes B$ where $B$ is the tensor product of 4-dimensional simple graded division algebras, with 1-dimensional homogeneous components. As usual, we must take into account that $\H^{(4)}\otimes\H^{(4)}$ is equivalent to $ M_2^{(4)}\otimes M_2^{(4)}$. 

Now assume there is $v\in R_g$ such that $v^2\not\in\R I$ and $v^4\in\R I$. Then $g$ has order 4. If also $u\in R_h$ is such that $u^2\not\in\R I$ then, as in Lemma \ref{l1}, $uv=\pm vu$ hence $(uv)^2\in\R I$. This allows us to find an elementary abelian 2-subgroup $H$ so that $G=(g)_4\times H$, and for any homogeneous $u\in R_h$, $h\in H$, we have $u^2=\lambda I$, where $\lambda\in\R$. 

Our next claim is that there is $u\in R_h$, $h\in H$ such that $uv=-vu$. Since $v^4=-I$, the converse would mean that the centre of $R$ contains four (!) linearly independent elements $I,v,v^2$ and $v^3$.  Thus, we may choose $u\in R_h, h\in H$ such that $vu=-uv$. Also, we may choose $u$ so that $u^2=-1$. If instead,  $u^2=1$, replace $u\in R_h$ by $iu\in R_{g^2h}$. Let us set $H_1=(g)_4\times(h)_2$. Then the subalgebra $A$ generated by $u,v$ is a graded division algebra with support $H_1$.Its graded presentation is $(v,u\;|\;v^4=u^2=-1,\:uv=-vu)$. Comparing with (\ref{eq28}), we conclude that $A\cong M_2^{(8)}$. 
 
Since 4 is the highest order of elements in $G$ we can find an elementary abelian 2-subgroup $H_2$ such that $G=H_1\times H_2$ and $vw=wv$, for all $w\in R_t$, where $t\in H_2$. 

Let us consider two cases. 

In the first case, we have that $uw=wu$, for all $w\in R_t$, $t\in H_2$. Setting $B=\bigoplus_{t\in H_2}R_t$ we get $R=A\otimes B$, where $B$ and is the direct product of 4-dimensional graded division algebras.

In the second case, there is unique $t\in H_2$ and $w\in R_t$ such that $vw=-wv$. Let us write $H_2=(t)_2\times H_2^\prime$. Suppose $w$ commutes with all $R_s$, $s\in H_2^\prime$. Consider the following subalgebras: $A^\prime$ generated by $v,u,w$ and $B^\prime=\bigoplus_{s\in H_2^\prime}R_s$. Then $R=A^\prime\otimes B^\prime$. It follows that $A^\prime$ is a 16-dimensional simple algebra over $\R$ whose centre contains $I, x^2$. Such algebras do not exist! As a result, we must assume that there is unique $z\in R_s$, $s\in H_2^\prime$ such that $wz=-zw$. Set $H_0=(g)_4\times(h)_2\times (t)_2\times(s)_2$ and write $G=H_0\times T$, where $T$ is an elementary abelian group. Let $C$ be the sum of all components of $R$ labelled by $H_0$ and $D$ by $T$. In this case, $\dim D_e =1$ and so $D$ is the tensor products of 4-dimensional simple graded division subalgebras. 

The algebra $C$ is a 32-dimensional graded division algebra. Normalizing $w, z$ so that $w^2=I$ and $z^2=I$, we obtain that $C$ is an algebra with graded presentation
\begin{eqnarray}\label{eq432c} 
(v,u,w,z\;&|&\;v^4=u^2=-w^2=-z^2=-I,\\
vu=-uv,\quad vw=wv&,& vz=zv,\quad uw=-wu,\quad uz=zu,\quad wz=-zw).\nonumber
\end{eqnarray}

If we replace $u$ by $\bu=uz$ then $\bu^2=uzuz=u^2z^2=-1$, $v\bu=vuz=-uvz=-uzv=-\bu z$, $\bu w=uzw=-uwz=wuz=w\bz$ and $\bu z=uzz=zuz=z\bu$. Thus $C$ admits presentation
\begin{eqnarray}\label{eq432d} 
(v,\bu,w,z\;&|&\;v^4=\bu^2=-1,w^2=z^2=I,\\
v\bu=-\bu v,\quad vw=wv&,& vz=zv,\quad \bu w=w\bu,\quad \bu z=z\bu,\quad wz=-zw),\nonumber
\end{eqnarray}
which is the presentation of the algebra $M_2^{(8)}\ot M_2^{(4)}$.

Put together, we have the following
\begin{proposition}\label{pc1} A division grading on $M_n(\C)$ with one - dimensional components is possible only if $n=2^k$. It can have one of two forms
\begin{enumerate}
\item[$\mathrm{(i)}$] $(M_2^{(4)})^{\ot k}\ot\C^{(2)}$,
\item[$\mathrm{(ii)}$] $M_2^{(8)}\ot (M_2^{(4)})^{\ot (k-1)}$.
\end{enumerate}
\end{proposition}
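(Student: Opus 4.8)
The plan is to exploit that a grading with one-dimensional components turns $R$ into a real twisted group algebra. Choose a nonzero $u_g$ in each $R_g$. By the first part of Lemma~\ref{l1}, $u_gu_h=\pm u_hu_g$ for all homogeneous elements, and since $u_g^2$ is a homogeneous element of $Z(R)$ we have $g^2\in\Supp Z(R)$ for every $g$. As $Z(R)=\C I$ is two-dimensional with $Z(R)\cap R_e=\R I$, its support is a subgroup $\{e,a\}$ of order $2$, and after rescaling $u_a$ we may assume $u_a^2=-I$. Hence $4G=\{e\}$, so $|G|$ is a power of $2$, which together with $|G|=\dim R=2n^2$ forces $n=2^k$. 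Define the skew-symmetric bicharacter $\beta\colon G\times G\to\{\pm1\}$ by $u_gu_h=\beta(g,h)u_hu_g$. A homogeneous $u_g$ is central exactly when $g$ lies in the radical of $\beta$, so $\{e,a\}$ \emph{equals} the radical of $\beta$, and $\beta$ descends to a non-singular skew-symmetric bicharacter on $G/\{e,a\}$. I would then distinguish the cases $a\notin 2G$ and $a\in 2G$.

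If $a\notin 2G$, then $2G=\{e\}$ and $G$ is an elementary abelian $2$-group; choose a complement $G=\{e,a\}\times W$. Then $\beta$ restricts to a non-singular bicharacter on $W$, the element $u_a$ commutes with every $u_w$ ($w\in W$), and a dimension count gives $R\cong\C^{(2)}\ot B$, where $B=\bigoplus_{w\in W}R_w$. Here $\dim B_e=1$ and $Z(B)=\R I$, so $B$ is a \emph{central} simple real division grading with one-dimensional components; by Proposition~\ref{pssrr}, $B$ is weakly isomorphic to $(M_2^{(4)})^{\ot k}$ or to $(M_2^{(4)})^{\ot(k-1)}\ot\H^{(4)}$. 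Using the weak isomorphism $\C^{(2)}\ot\H^{(4)}\cong\C^{(2)}\ot M_2^{(4)}$ recorded earlier, in either case $R$ has form~(i).

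If $a\in 2G$, the structure theorem together with $2G=\{e,a\}$ and $4G=\{e\}$ shows that $G$ has a direct factor $(g_0)_4$ with $g_0^2=a$, the complement being an elementary abelian $2$-group. Since $g_0$ (of order $4$) is not in the radical of $\beta$, non-singularity of the descended bicharacter yields $h_0$ with $\beta(g_0,h_0)=-1$; after possibly replacing $h_0$ by $h_0a$ and rescaling we may assume $u_{h_0}^2=-I$, and then, rescaling $u_{g_0}$, we get $u_{g_0}^4=(u_{g_0}^2)^2=u_a^2=-I$. Thus $A=\langle u_{g_0},u_{h_0}\rangle$, supported on $H_1=(g_0)_4\times(h_0)_2$, has the graded presentation \eqref{eq28p}, so $A\cong M_2^{(8)}$. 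Since $\C I\subseteq A$ we may regard $R$ as a $\C$-algebra; then $A$ is a central simple $\C$-subalgebra, so $R\cong A\otimes_{\C}B$ where $B$ is the centralizer of $A$ in $R$. One has $\Supp B=H_1^{\perp}$, $\dim B_e=1$, $Z(B)=\C I$, and $|H_1^{\perp}|=|G|/4<|G|$, so by induction on $|G|$ the algebra $B$ is weakly isomorphic to $(M_2^{(4)})^{\ot j}\ot\C^{(2)}$ or to $M_2^{(8)}\ot(M_2^{(4)})^{\ot(j-1)}$. Substituting back: in the first case the $\C^{(2)}$ factor of $B$ is absorbed into $A$, giving $R\cong M_2^{(8)}\ot(M_2^{(4)})^{\ot j}$; in the second case one is reduced to identifying $M_2^{(8)}\ot M_2^{(8)}$, the tensor product taken over $\C$ — equivalently the $32$-dimensional algebra \eqref{eq432c} — with $M_2^{(8)}\ot M_2^{(4)}$, which is the graded change of variables \eqref{eq432c}$\to$\eqref{eq432d}. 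Either way $R$ has form~(ii).

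The main obstacle lies in the case $a\in 2G$: one must check carefully that the $\beta$-orthogonal complement $H_1^{\perp}$ genuinely cuts out a graded subalgebra $B$ for which $A$ and $B$ generate $R$ and the whole centre of $R$ is contained in $A$, so that the inductive hypothesis applies to $B$ rather than to something larger, and then carry out the explicit graded identification $M_2^{(8)}\ot M_2^{(8)}\cong M_2^{(8)}\ot M_2^{(4)}$ over $\C$. This last point is the one genuine computation, and it is precisely where the order-$4$ generator interacts with the sign data $g\mapsto u_g^2$; everything else reduces to dimension bookkeeping, Proposition~\ref{pssrr}, and routine rescalings of the $u_g$.
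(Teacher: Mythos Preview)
Your argument in the elementary-abelian case ($a\notin 2G$) matches the paper's exactly. In the case $a\in 2G$, the paper does \emph{not} use induction: after isolating $A\cong M_2^{(8)}$ on $H_1=(g)_4\times(h)_2$, it chooses a group-theoretic complement $H_2$ to $H_1$ with the extra property that $v=u_g$ commutes with every $R_t$, $t\in H_2$ (not the full $\beta$-orthogonal complement). It then asks whether $u=u_h$ also commutes with $R_{H_2}$; if not, it adjoins two further generators $w,z$ to obtain the explicit $32$-dimensional presentation \eqref{eq432c}, and the substitution $\bar u=uz$ of \eqref{eq432d} finishes the job. Your inductive route via $B=C_R(A)$ and $R\cong A\otimes_\C B$ is a legitimate alternative and is arguably cleaner conceptually, but the key computation you defer to the end is misidentified.

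The algebra you call ``$M_2^{(8)}\otimes_\C M_2^{(8)}$'' is \emph{not} the algebra \eqref{eq432c}. In \eqref{eq432c} the generator $u$ \emph{anticommutes} with $w$, whereas in your situation the two copies of $M_2^{(8)}$ commute elementwise (that is how $B$ arose, as the centralizer of $A$). Concretely, if $v,u$ generate $A$ and $v',u'$ generate the $M_2^{(8)}$-factor of $B$, the relations are $vv'=v'v$, $uv'=v'u$, $vu'=u'v$, $uu'=u'u$, together with $v^2=\pm(v')^2$; this is not \eqref{eq432c}. To reach something like \eqref{eq432c} you must first perform a preliminary substitution, e.g.\ set $w=v(v')^{-1}$ (so that $w^2=\pm I$); then $w$ anticommutes with $u$ and with $u'$ while commuting with $v$, and \emph{now} the commutation pattern matches \eqref{eq432c} (with $z=u'$), after which a variant of the $\bar u=uz$ trick applies. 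You should also note that, depending on signs, the second factor you obtain may be $\H^{(4)}$ rather than $M_2^{(4)}$; this is harmless since $v^2$ is central and $p\mapsto v^2p$ flips the sign of $p^2$, giving $M_2^{(8)}\otimes\H^{(4)}\cong M_2^{(8)}\otimes M_2^{(4)}$, but it is an extra step your outline omits.
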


\subsection{Case $\dim R_e=4$}\label{shc} We have $R_e\cong\H$ and $R\cong M_n(\C)$. Again, as in Subsection \ref{ssrr}, we apply Lemma \ref{leh} to obtain that $R= R_e\ot C(R_e)$, where $n=2^k$. Here $C(R_e)$ is the centralizer of $R_e $. Now  since $S=C(R_e)\cong M_k(\C)$ and the homogeneous components of $S$ are one-dimensional, we may invoke Proposition \ref{pc1} to get the following.

\begin{proposition}\label{pc4} A  division grading on $M_n(\C)$ with four-dimensional components are exists if and only if $n=2^k$. It is weakly isomorphic to one of the following
\begin{enumerate}
\item[$\mathrm{(i)}$] $(M_2^{(4)})^{\ot k}\ot\C^{(2)}\ot\H$,
\item[$\mathrm{(ii)}$] $M_2^{(8)}\ot (M_2^{(4)})^{\ot (k-1)}\ot\H$
\end{enumerate}
\end{proposition}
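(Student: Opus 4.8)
The plan is to reduce Proposition \ref{pc4} to Proposition \ref{pc1} by stripping off the quaternion factor coming from the four-dimensional neutral component. First I would invoke Lemma \ref{leh}: since $\dim R_e=4$, it applies and yields $R\cong\H\ot S$, where $S$ is a graded division algebra with $\dim S_e=1$ and $R_e=\H\ot 1$. Concretely, $S=C(R_e)$, the centralizer of the neutral component $R_e\cong\H$ inside $R$; because $R_e$ is central simple over $\R$, the double centralizer theorem (or direct dimension count as in the proof of Lemma \ref{leh}) gives $R\cong R_e\ot C(R_e)$ as algebras, compatibly with the grading since $C(R_e)$ is a graded subalgebra.

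Next I would identify $S$ as an algebra. Since $R\cong M_n(\C)$ and $\H\ot S\cong M_n(\C)$, tensoring with $\C$ over $\R$ gives $\C\ot_{\R}\H\ot_{\R}S\cong M_n(\C)\ot_{\R}\C\cong M_n(\C)\oplus M_n(\C)$; using $\C\ot_{\R}\H\cong M_2(\C)$ one gets $M_2(\C)\ot_\R S\cong M_n(\C)\oplus M_n(\C)$, and comparing dimensions and simple-factor structure forces $S\cong M_{n/2}(\C)$, so in particular $n$ is even. Alternatively, and more in the spirit of the paper, one notes directly that $S$ is a simple real algebra of dimension $n^2/2$ whose center must be $\C$ (it cannot be $\R$, since $\H\ot M_k(\R)$ has no complex structure in its center whereas $R$ does; more precisely $Z(R)=Z(\H)\ot Z(S)=\R\ot Z(S)=Z(S)$, and $Z(R)=\C I$). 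Hence $S\cong M_k(\C)$ with $2^k\cdot 2 = \dots$; writing $\dim_\R S = 2k^2$... rather, $S\cong M_m(\C)$ with $4m^2 = n^2/2 \cdot$... — the cleanest bookkeeping is: $\dim_\R R = 2n^2$, $\dim_\R \H = 4$, so $\dim_\R S = n^2/2$, and $\dim_\R M_m(\C)=2m^2$ gives $m^2=n^2/4$, i.e. $S\cong M_{n/2}(\C)$ and $n$ is even. Write $n=2^k$ once we know from Proposition \ref{pc1} that such gradings force a power of $2$.

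Now I would apply Proposition \ref{pc1} to the graded division algebra $S\cong M_{n/2}(\C)$, which has one-dimensional homogeneous components by Lemma \ref{leh}. That proposition tells us $n/2=2^{k-1}$ (so $n=2^k$), and that $S$ is weakly isomorphic to either $(M_2^{(4)})^{\ot(k-1)}\ot\C^{(2)}$ or $M_2^{(8)}\ot(M_2^{(4)})^{\ot(k-2)}$. Tensoring back with $\H$ (and observing that the tensor-product grading on $\H\ot S = \H^{(\mathrm{triv})}\ot S$, where $\H$ carries the trivial grading placing all of $\H$ in degree $e$, is exactly the grading of $R$ with $R_e=\H\ot 1$), a weak isomorphism $S\to S'$ extends to a weak isomorphism $\H\ot S\to\H\ot S'$ by tensoring with $\id_\H$ and using the same group automorphism on the (now identical) grading group. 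This yields that $R$ is weakly isomorphic to $\H\ot(M_2^{(4)})^{\ot(k-1)}\ot\C^{(2)}\cong (M_2^{(4)})^{\ot(k-1)}\ot\C^{(2)}\ot\H$ in case (i) — matching form $\mathrm{(i)}$ after reindexing $k-1\rightsquigarrow k$ exactly as stated — or to $\H\ot M_2^{(8)}\ot(M_2^{(4)})^{\ot(k-2)}=M_2^{(8)}\ot(M_2^{(4)})^{\ot(k-2)}\ot\H$, matching form $\mathrm{(ii)}$ after the same shift. Conversely, both listed algebras genuinely have $\dim R_e=4$ since $\H$ contributes a four-dimensional neutral part and every factor of $S$ has one-dimensional neutral part, so the ``if and only if'' is complete.

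The main obstacle I anticipate is not any single hard computation but the careful verification that the algebra decomposition $R\cong\H\ot S$ from Lemma \ref{leh} is simultaneously a decomposition of \emph{graded} algebras in which $\H$ sits in degree $e$ and $S$ inherits a grading with one-dimensional components over the \emph{same} grading group $G$ — only then does ``tensor with $\id_\H$'' transport a weak isomorphism of $S$ to one of $R$. This is essentially contained in the proof of Lemma \ref{leh} (the representatives $v_g=q_g^{-1}u_g$ span the graded subalgebra $S$ and commute with $R_e$), so the work is just to assemble it cleanly; once that is in hand, the rest is a direct appeal to Proposition \ref{pc1} plus the trivial observation that tensoring a weak isomorphism with the identity on a trivially graded factor is again a weak isomorphism.
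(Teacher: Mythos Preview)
Your approach is essentially the same as the paper's: apply Lemma \ref{leh} to write $R\cong R_e\ot C(R_e)=\H\ot S$ with $S$ a graded division algebra having one-dimensional components, identify $S$ as $M_{n/2}(\C)$ (the paper asserts this directly, with a typo; your center and dimension argument is a cleaner justification), and then invoke Proposition \ref{pc1}. Your concern about the decomposition being graded is exactly what Lemma \ref{leh} provides, so there is no gap; the index shift you note is a cosmetic discrepancy in the paper's own labeling of $k$.
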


\subsection{Case $\dim R_e=2$}
In this case $ R=M_n ( \C ) $, for some natural $ n $. We also have $ R_e \cong \C$. There are two cases. In the first case, $ R_e $ is central, hence $ R_e=\C I$, where $ I $ is the identity matrix. Since $ R_g= R_e u_g=\C u_g $, for some matrix $u_g$, $ g\in G $, it follows that each $ R_g $ is a complex subspace of $ R=M_n ( \C ) $ so that our grading is a grading of $ R=M_n ( \C ) $, as a matrix algebra over the field of complex numbers.  Clearly, this is a division grading.  All such gradings have been described using generalized Pauli matrices.

In the second case, the centre $\C I$ is the direct sum of $\R I$ of identity degree $e$ and $\R iI$ of degree $\alpha $, where $\alpha^2=e$. Also, $\R_e$ is the linear span over $\R$ of the identity matrix $I$ and a non-central matrix $J$, such that $ J^2=-I$. Given $\lambda\in R_e$ such that $\lambda=\alpha I+\beta J$, $\alpha,\beta\in\R$, we set $\overline{\lambda}=\alpha I-\beta J$. Since $R_e\cong\C$, any nontrivial automorphism of $R_e$ maps $\lambda$ to $\overline{\lambda}$.

%%%%%%%%%%%%%%%%%%%%%%%%%%%%%%%

The conjugation $x\mapsto JxJ^{-1}$ is a linear transformation of order 2 on $R$. It is easy to see that in this case, there is a homogeneous $x\in R$ such that $JxJ^{-1}=-x$. Actually, $R=R^+\oplus R^-$, where
\begin{equation}\label{epm}
R^\pm=\{ z\in R\,\vert\, JzJ^{-1}=\pm z\}.
\end{equation}
Each $R^\pm$ is a graded subspace and the support $H$ of $R^+$ is a subgroup of index 2 in $G$.

If $a\in R^+$ and $b\in R^-$ are two homogeneous elements then $bab^{-1}=\lambda a$, where $\lambda\in R_e$. The conjugation by $b$ is an nontrivial isomorphism of $R_e\cong\C$, hence $b\lambda=\overline{\lambda}b$. As a result, $b^2ab^{-2}=\overline{\lambda}\cdot\lambda a=|\lambda|^2a$. 

Since $b^2\in R^+$, it follows that there is natural $m$ such that $b^{2m}\in  R_e\subset  Z(R^+)$. Then $(|\lambda|^2)^m=1$ and $|\lambda|=1$. Hence $b^2a=ab^2$, for all $a\in R^+$. Since $R^-=bR^+$, it follows that $b^2$ commutes also with $R^-$. Finally, $b^2$ is a central homogeneous element of $R$. But then $b^2=zI$, for some $z\in \C$. Now $\C I$ is a graded $\R$-subalgebra hence splits as the sum of homogeneous components $\langle I\rangle$ and $\langle iI\rangle$. Therefore, either $b^2\in\langle I\rangle$ or $b^2\in\langle iI\rangle$.

If $b^2\in\langle I\rangle$, for all homogeneous $b\in R^-$, then all elements $g\in G\setminus H$ are elements of order 2. \textit{Clearly, in this case $G$ is an elementary abelian 2-group}. Let $H_0$ be the support of $Z(R)=\C I$. Then one can write $G=H_0\times H_1$, for some subgroup $H_1$. In this case, $R'=\bigoplus_{h\in H_1} R_h$ is a graded subalgebra isomorphic to $M_n(\R)$, with $\dim R'_e=2$. We already know from Proposition \ref{pcgda2} that in this case we have one of three cases $R'\cong M_2^{(2)}\ot(M_2^{(4)})^{\ot k}$ or $R'\cong M_2^{(2)}\ot(M_2^{(4)})^{\ot k}\ot\H^{(4)}$ or else $R'\cong\H^{(2)}\ot (M_2^{(4)})^{\ot k}$ . So in this case we have 
\begin{eqnarray}\label{ec1}
R&\cong& M_2^{(2)}\ot(M_2^{(4)})^{\ot k}\ot\C^{(2)};\; R\cong M_2^{(2)}\ot(M_2^{(4)})^{\ot k}\ot\H^{(4)}\ot\C^{(2)};\\R&\cong& \H^{(2)}\ot (M_2^{(4)})^{\ot k}\ot\C^{(2)}.\nonumber
\end{eqnarray}

However, $\H^{(4)}\ot\C^{(2)}\cong M_2^{(4)}\ot\C^{(2)}$ and $\H^{(2)}\ot\C^{(2)}\cong M_2^{(2)}\ot\C^{(2)}$. Therefore, in the first case, we have only 
\begin{equation}\label{ec1i}
R\cong M_2^{(2)}\ot(M_2^{(4)})^{\ot k}\ot\C^{(2)}
\end{equation}
In the second case, there is a homogeneous $b\in R^-$ such that $b^2\not\in R_e$. Therefore, $G$ is not an elementary 2-abelian group. Still, we can prove that in this case $G\cong \ZZ_4\times\z\times\cdots\times\z$. Indeed, as shown earlier, for any $g\not\in H$ we either have $g^2=e$ or $g^2=\alpha$, where $\alpha$ is the degree of $iI$. Let us consider $\bG=G/(\alpha)$. In this group there is a subgroup $\overline{H}\cong H/(\alpha)$ such that for any $\bg\not\in\overline{H}$ we have $\bg^2=\be$. As previously, $\bG$ is an elementary abelian 2-group. This can only happen if $G$ is as claimed.

Recall that $R_e=\langle I,J\rangle$ where $J^2=-I$, $J$ is not a real multiple of $iI$. We have $G=(g)_4\times H$ where $H$ is an elementary abelian 2-group. We have an element $x\in R_g$ such that $x^2=iI\in Z(R)$ and $xJ=-Jx$. Let us consider a subalgebra $A$ in $R$ with support $(g)_4$. Then the dimension of $A$ is 8 and it is generated by $J,x$ with relations $J^2=-I$, $x^4=-I$ and $xJ=-Jx$. This grading has appeared in Subsection \ref{ss2} and it was denoted by $M_2(\C,\ZZ_4)$. 

In order to proceed, let us denote by $D$ the subspace in $R$ of the form $D=\bigoplus_{h\in H}R_h$. Every component of this subspace is two-dimensional and one can write $R_h=R_h^+\oplus R_h^-$ where each $R_h^+$ is spanned by a vector $u_h$ which commutes with $x$, as in the previous paragraph, and $R_h^-$ is spanned by $v_h$, which anticommutes with $x$. This follows because $JR_h=R_h$ and $xJ=-Jx$. Indeed, since $wx=\pm xw$, for all homogeneous $w$, if $wx=xw$, for $w\in R_h$ then $(Jw)x=(Jx)w=(-xJ)w=-x(Jw)$. Similarly, if $wx=-xw$, for $w\in R_h$ then $(Jw)x=x(Jw)$.

Let us consider $B=\bigoplus_{h\in H}B_h$, where $B_h=R_h^+$. This is a graded subalgebra all of whose homogeneous components one-dimensional, hence a graded division algebra of the one of the types $ (M_2^{(4)})^{\ot m}$ or  $(M_2^{(4)})^{\ot m}\ot\H^{(4)}$. As vector spaces, $R\cong A\otimes B$. If $A$ and $B$ commute (actually, we only need $A$ and $J$ commute) then $R\cong A\otimes B$ as graded algebras. So in this case we have algebras of the form $M_2(\C,\ZZ_4)\ot  (M_2^{(4)})^{\ot m}$ or  $M_2(\C,\ZZ_4)\ot (M_2^{(4)})^{\ot m}\ot\H^{(4)}$. 

Now since $uJ=\pm Ju$, for all homogeneous $u\in B$, we have a linear form $\vp$ on $H$, viewed as a linear space over $\z$ defined by $uJ=(-1)^{\vp(h)}Ju$. Since all homogeneous elements of $B$ either commute or anticommute, there is a bilinear (skew)symmetric form $\sigma$ on $H$ with values in $\z$ defined by $u'u''=(-1)^{\sigma(h',h'')}u''u'$ for $u'\in B_{h'}$ and $u''\in B_{h''}$. One easily checks the linearity of $\vp$ and bilinearity of $\sigma$. Let $H'=\Ker\vp$. In our present case, $H'\neq H$, that is, there is $y\in B_k$ such that $yJ=-Jy$. Let $B'=\bigoplus_{h\in H'}$. Then $B'$ is a central graded division algebra. Let $M$ be subalgebra generated by $J,x,y$. The elements of $B'$ commute with $J$ and $x$. If also all elements of $B'$ commute with $y$ then we have that $R=M\ot B'$. In this case, $M$ is simple algebra with 2-dimensional center and of dimension 16. Such algebras do not exist. As a result, there is $z\in B_\ell$ such that $zy=-yz$. 

Let $H_1=(k)\times(\ell)$. The restriction of $\sigma$ to $H_1$ is nonsingular, and so $H=H_1\times H_2$, where $H_2$ is an orthogonal complement to $H_1$ with respect to $\sigma$. Let $N$ be a subalgebra generated by $J,x,y,z$ and $C$ the subalgebra of $R$ with support $H_2$. Then $R=N\ot C$. As before, $C$ is a simple graded division algebra with 1-dimensional homogeneous components. It follows that $C\cong (M_2^{(4)})^{\ot m}$, or $(M_2^{(4)})^{\ot m}\ot\H^{(4)}$, for some integral $k$.

As for $N$, we have that $y^2$ and $z^2$ are in $B_e\cong \R$. Replacing them, if necessary, by $iy$ and $iz$, we may assume $y^2=-I$ and $z^2=I$. Then consider the subalgebra $N=\alg\{J,x,y,z\}$. This is a 32-dimensional real graded algebra with graded presentation 
\begin{eqnarray}\label{eq432c1}
(x,\; J,\; y,\; z\;|\;J^2=-I,&\;&x^4=-I,y^2=-I,\;z^2=I\\
xJ=-Jx,\; xy=yx,\; xz=zx,&\; &Jy=-yJ,\;  Jz=zJ,\;  yz=-zy)
\end{eqnarray}
A quick comparison with (\ref{eq432c}) shows that we have obtained a coarsening of $M_4^{(32)}$, denoted earlier by $M_4(\C,\ZZ_4\times\z^2)$. As we noted earlier, $M_4^{(32)}$ is weakly isomorphic to $M_2^{(8)}\ot M_2^{(4)}$ by means of  replacement of $J$ by $Jz$. One can also go from $M_2^{(8)}\ot M_2^{(4)}$ by means of inverse operation. As a result, if, in the presentation of $M_2^{(8)}\ot M_2^{(4)}$ we set $\deg x=\alpha$, $\deg J=\gamma$, $\deg y=\beta$, $\deg z=\gamma$ then after replacement of $J$ by $Jz$, we will get exactly our graded algebra $N$. As a result, we have obtained an algebra weakly isomorphic to $M_2(\C,\ZZ_4)\ot M_2^{(4)}$.

\begin{proposition}\label{pc2}
Any graded division grading on $R=M_n(\C)$, whose graded components are 2-dimensional, belongs to one of the types:
\begin{enumerate}
\item[$\mathrm{(i)}$] Pauli grading,
\item[$\mathrm{(ii)}$] $M_2^{(2)}\ot(M_2^{(4)})^{\ot k}\ot\C^{(2)}$
\item[$\mathrm{(iii)}$] $ M_2(\C,\ZZ_4)\ot(M_2^{(4)})^{\ot k}$,
\end{enumerate}
\end{proposition}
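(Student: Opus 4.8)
\textbf{Proof strategy for Proposition \ref{pc2}.}

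The plan is to split on whether the identity component $R_e\cong\C$ is central in $R$. If $R_e=Z(R)=\C I$, then every homogeneous component equals $R_e u_g=\C u_g$ and hence is a complex subspace, so the grading is simultaneously a grading of $M_n(\C)$ as an algebra over $\C$; being a division grading it is then a complex division grading, and all of these are, up to weak isomorphism, the Sylvester/Pauli gradings recalled in Subsection \ref{ssp}. This produces type (i), so from now on we may assume $R_e$ is not central, i.e. $R_e=\langle I,J\rangle$ with $J^2=-I$, $J\notin\C I$, while $Z(R)=\R I\oplus\R iI$ with $\deg(iI)=\alpha$, $\alpha^2=e$.

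Conjugation by $J$ is an involutive linear map, giving a graded decomposition $R=R^+\oplus R^-$, where $R^\pm=\{z\mid JzJ^{-1}=\pm z\}$ and the support $H$ of $R^+$ has index $2$ in $G$. The key local step is that $b^2\in Z(R)$ for every homogeneous $b\in R^-$: conjugation by $b$ restricts to the nontrivial automorphism (complex conjugation) of $R_e\cong\C$, so $b\lambda=\overline\lambda\,b$, whence $b^2ab^{-2}=|\lambda|^2a$ for homogeneous $a\in R^+$; since some power $b^{2m}$ lies in $R_e$, this forces $|\lambda|=1$, so $b^2$ centralizes $R^+$, hence $R^-=bR^+$, hence all of $R$. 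As $\C I$ is a graded subalgebra, for each such $b$ either $b^2\in\langle I\rangle$ or $b^2\in\langle iI\rangle$, and the argument then branches on whether $G$ is an elementary abelian $2$-group.

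If $b^2\in\langle I\rangle$ for all homogeneous $b\in R^-$, every element of $G$ has order $\le 2$, so $G$ is elementary abelian; writing $G=H_0\times H_1$ with $H_0=\supp Z(R)=(\alpha)_2$, the subalgebra $R'=\bigoplus_{h\in H_1}R_h$ is a central simple real graded division algebra isomorphic to $M_n(\R)$ with $\dim R'_e=2$, so Proposition \ref{pcgda2} (with Lemma \ref{lhm} to absorb any $\H^{(2)}$ factor) gives $R'\cong M_2^{(2)}\ot(M_2^{(4)})^{\ot k}$ or $M_2^{(2)}\ot(M_2^{(4)})^{\ot k}\ot\H^{(4)}$; tensoring back with $\C^{(2)}$ and using $\H^{(4)}\ot\C^{(2)}\cong M_2^{(4)}\ot\C^{(2)}$ collapses both to type (ii). Otherwise some homogeneous $b\in R^-$ has $b^2\in\langle iI\rangle$, so the degree of $b$ has order $4$; passing to $\bG=G/(\alpha)$, which is elementary abelian by the argument above, forces $G\cong\ZZ_4\times\z^r$. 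Choosing $g$ of order $4$ and $x\in R_g$ with $x^2=iI$ and $xJ=-Jx$, the subalgebra on support $(g)_4$ is the $8$-dimensional $M_2(\C,\ZZ_4)$. Writing $G=(g)_4\times H$ with $H$ elementary abelian, one decomposes each $R_h$ into its $x$-commuting and $x$-anticommuting lines, lets $B$ be the span of all commuting lines — a central simple graded division algebra with one-dimensional components, hence $(M_2^{(4)})^{\ot m}$ or $(M_2^{(4)})^{\ot m}\ot\H^{(4)}$ by Proposition \ref{pssrr} — and records on $H$ the linear form $\vp$ telling whether a line of $B$ commutes or anticommutes with $J$ and the alternating form $\sigma$ recording commutation within $B$. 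If $\vp=0$ then $R\cong M_2(\C,\ZZ_4)\ot B$ as graded algebras, which after the identity $M_2(\C,\ZZ_4)\ot\H^{(4)}\cong M_2(\C,\ZZ_4)\ot M_2^{(4)}$ is type (iii). If $\vp\ne0$, pick $y\in B_k$ with $yJ=-Jy$; then there must be $z\in B_\ell$ with $zy=-yz$, since otherwise the algebra generated by $J,x,y$ would be a $16$-dimensional simple real algebra with $2$-dimensional centre, which does not exist. Splitting off the $\sigma$-orthogonal complement $H_2$ of $H_1=(k)_2\times(\ell)_2$ yields $R=N\ot C$ with $N=\alg\{J,x,y,z\}$ of dimension $32$ and $C$ of one-dimensional components; after normalizing $y^2=-I$, $z^2=I$ one identifies $N$ from its presentation \eqref{eq432c1} with the coarsening $M_4(\C,\ZZ_4\times\z^2)$ of $M_4^{(32)}$, and since $M_4^{(32)}$ is weakly isomorphic to $M_2^{(8)}\ot M_2^{(4)}$, the algebra $N$ is weakly isomorphic to $M_2(\C,\ZZ_4)\ot M_2^{(4)}$ (replace $J$ by $Jz$); so again type (iii).

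The step I expect to be the main obstacle is the last case: pinning down that $G\cong\ZZ_4\times\z^r$, cleanly peeling off the $M_2(\C,\ZZ_4)$ block, and then running the $\vp$/$\sigma$ linear algebra on $H$ while invoking the nonexistence of simple real algebras of dimension $16$ with $2$-dimensional centre (and the analogous $32$-dimensional obstruction) — together with the several weak-isomorphism identities ($\H^{(4)}\ot\C^{(2)}\cong M_2^{(4)}\ot\C^{(2)}$, $M_2(\C,\ZZ_4)\ot\H^{(4)}\cong M_2(\C,\ZZ_4)\ot M_2^{(4)}$, $M_4^{(32)}\cong M_2^{(8)}\ot M_2^{(4)}$) needed to collapse the superficially many cases into exactly the three listed types.
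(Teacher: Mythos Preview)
Your proposal is correct and follows essentially the same route as the paper's own argument in Section~\ref{sc}: the same split on whether $R_e$ is central (giving Pauli gradings), the same $J$-eigenspace decomposition $R=R^+\oplus R^-$ with the $b^2\in Z(R)$ computation, the same branch on whether $G$ is elementary abelian (reducing via Proposition~\ref{pcgda2} and the identity $\H^{(4)}\ot\C^{(2)}\cong M_2^{(4)}\ot\C^{(2)}$ to type~(ii)), and in the $\ZZ_4$ case the same construction of $A\cong M_2(\C,\ZZ_4)$, the subalgebra $B$ of $x$-commuting lines, and the $\vp/\sigma$ analysis culminating in the $16$-dimensional nonexistence argument and the identification of the $32$-dimensional block $N$ with $M_2(\C,\ZZ_4)\ot M_2^{(4)}$ via the substitution $J\mapsto Jz$. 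Your anticipated ``$32$-dimensional obstruction'' is not actually needed---the $32$-dimensional algebra $N$ is handled purely by writing down its presentation~\eqref{eq432c1} and recognizing it as the stated coarsening---but otherwise your outline matches the paper step for step.
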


\section{Structure of finite-dimensional simple real algebras}

As is well-known (see for example \cite[Chapter 2]{EK},\cite{NVO} or other sources) that for any Artinian $G$-graded simple algebra $R$, $G$ an abelian group, there exist a graded division algebra $D$ and a graded left-$R$, right-$D$ module $U$ such that $R$ is isomorphic to  $\End_DU$ as graded algebras. The grading on $\End_DU$ is canonical in the following sense. Let $H$ be the support of $D$, which is a subgroup in $G$. Then $U$ is the sum of irreducible $D$-modules (1-dimensional right vector subspaces over $D$) which are graded isomorphic to the shifts $D^{[g]}$. If $g_1H=g_2H$ then 
$D^{[g_1]}\cong D^{[g_2 ]}$, as graded $D$-vector spaces. Now given $A\in G/H$, we denote by $U(A)$ the sum of all $D^{[g]}$ in $U$ such that $gH=A$. Then we have a function $\kappa:G/H\to\NN\cup\{ 0\}$ given by $\kappa(A)=\dim_DU(A)$, for any $A\in G/H$. We call $\kappa$ the dimension function for $R$. It follows that with each simple graded Artinian algebra one can associate a pair $(D,\kappa)$, where $D$ is a $G$-graded division algebra with support $H$ and $\kappa$ is a function $\kappa: G/H\to\NN_0$. Note that $D$ must be simple as a non-graded algebra. Conversely, using $(D,\kappa)$, as just above, one can construct a simple graded algebra $\End_DU$. Let us denote this algebra by $\cM(D,\kappa)$. 

\begin{definition} A pair $(D,\kappa)$ is called equivalent to $(D',\kappa'))$ (we write $(D,\kappa)\sim(D',\kappa')$) if $D\cong D'$, with the same support $H$, and there are $g\in G$ and a permutation $\pi:G/H\to G/H$ such that $\kappa(A)=\kappa'(\pi(A))$ and $A=g\pi(A)$, for all $A\in G/H$.
\end{definition}

Then we have the following (cf \cite[Corollary 2.1.2]{EK}).

\begin{proposition}\label{pEK}
Any simple graded Artinian associative algebra is isomorphic to $\cM(D,\kappa)$, for some simple graded division algebra $D$ with support $H$ and a dimension function $\kappa$. Two algebras $\cM(D,\kappa)$ and $\cM(D',\kappa')$ are isomorphic if and only if $(D,\kappa)\sim (D',\kappa')$.
\end{proposition}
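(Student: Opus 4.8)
\emph{Plan.} The statement is the graded Wedderburn--Artin theorem for graded-simple Artinian algebras together with a classification of the resulting data up to graded isomorphism; I would follow the development in \cite[Chapter 2]{EK}, the only real work being to track how the pair $(D,\kappa)$ transforms under a graded isomorphism. For \emph{existence}: since $R$ is graded Artinian it contains a minimal graded left ideal $L$, which is a graded-simple left $R$-module, and it is faithful because its annihilator is a graded ideal of the graded-simple algebra $R$. By the graded Schur lemma $D:=\End_R(L)$ is a graded division algebra; let $H$ be its support, a subgroup of $G$. Viewing $U:=L$ as a graded right $D$-module and using $\dim_D U<\infty$ (a consequence of the Artinian hypothesis), the graded density theorem gives a graded algebra isomorphism $R\cong\End_D(U)$. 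Decomposing $U$ into one-dimensional graded right $D$-subspaces, each summand is graded isomorphic to a shift $D^{[g]}$; and since $D_h\neq\{0\}$ for every $h\in H$, right multiplication by an invertible homogeneous element shows $D^{[g_1]}\cong D^{[g_2]}$ whenever $g_1H=g_2H$. Hence the multiplicity of $D^{[g]}$ in $U$ depends only on the coset $gH$, defining $\kappa:G/H\to\NN_0$ by $\kappa(A)=\dim_D U(A)$, so that $R\cong\cM(D,\kappa)$. (When $R$ is in addition simple as an ungraded algebra, so is $D$, since an ungraded ideal of $D$ would contradict simplicity of $R\cong\End_D(U)$.)

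\emph{Sufficiency.} Suppose $(D,\kappa)\sim(D',\kappa')$, so we have a graded isomorphism $D\cong D'$ with common support $H$, an element $g\in G$, and a permutation $\pi$ of $G/H$ with $\kappa(A)=\kappa'(\pi(A))$ and $A=g\pi(A)$ for all $A\in G/H$. Let $U,U'$ be the corresponding graded modules. Because $G$ is abelian, shifting every homogeneous component of $U$ by a fixed $g$ does not change which $D$-linear maps are homogeneous of a given degree, so $\End_D(U)=\End_D(U^{[g]})$ as graded algebras; moreover $(D^{[h]})^{[g]}\cong D^{[gh]}$. The two relations $\kappa(A)=\kappa'(\pi(A))$ and $A=g\pi(A)$ say exactly that $U^{[g]}$ and $U'$ have the same isotypic multiplicities coset by coset, so after transporting scalars along $D\cong D'$ and reindexing the isotypic pieces of $U^{[g]}$ by $\pi$ one gets a graded $D'$-module isomorphism $U^{[g]}\cong U'$. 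Therefore $\cM(D,\kappa)=\End_D(U)\cong\End_{D'}(U')=\cM(D',\kappa')$.

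\emph{Necessity.} Conversely, let $\psi:\cM(D,\kappa)\to\cM(D',\kappa')$ be a graded isomorphism. In a graded-simple Artinian algebra all minimal graded left ideals are graded isomorphic, and by the graded Schur lemma the graded endomorphism ring of such an ideal recovers $D$ (resp. $D'$) up to graded isomorphism; since $\psi$ maps minimal graded left ideals to minimal graded left ideals, $D\cong D'$ as graded algebras, with the same support $H$. Moreover, up to graded isomorphism and an overall shift, the defining module is the unique graded-simple left module, so $\psi$ forces $U'\cong U^{[g]}$ for some $g\in G$. Comparing the coset-indexed multiplicities of $U$ and $U^{[g]}$, again via $(D^{[h]})^{[g]}\cong D^{[gh]}$, yields a permutation $\pi$ of $G/H$ with $\kappa(A)=\kappa'(\pi(A))$ and $A=g\pi(A)$; that is, $(D,\kappa)\sim(D',\kappa')$.

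\emph{Main obstacle.} The delicate point is the uniqueness input behind necessity: that a graded algebra isomorphism $\cM(D,\kappa)\cong\cM(D',\kappa')$ forces not only $D\cong D'$ as graded algebras but also that the underlying graded modules coincide up to a single global shift by an element $g\in G$. This is the graded refinement of the classical statement that the simple module over a simple Artinian ring is unique; its proof rests on the graded Jacobson density theorem and on careful bookkeeping of shifts, which is precisely the phenomenon the equivalence relation $(D,\kappa)\sim(D',\kappa')$ is built to absorb. Everything else is the standard graded Wedderburn--Artin argument, and the remaining verifications (that the shift and the permutation $\pi$ combine correctly on cosets) are routine.
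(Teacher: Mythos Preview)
The paper does not actually prove this proposition: it is stated with a reference to \cite[Corollary 2.1.2]{EK} and no argument is given. Your sketch is the standard graded Wedderburn--Artin argument from that source (graded Schur, graded density, uniqueness of the graded-simple module up to shift), so it is correct and is precisely the approach the paper is invoking by citation.
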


As a result, our main theorem sounds as follows.
\begin{theorem}\label{tM}
Any real simple finite-dimensional algebra endowed with a grading by a finite abelian group is isomorphic to an algebra $\cM(D,\kappa)$, where $D$ is weakly isomorphic to one of the algebras in the list of Theorem \ref{tgdar}. Two algebras $\cM(D,\kappa)$ and $\cM(D',\kappa')$ are isomorphic if and only if $(D,\kappa)\sim (D',\kappa')$.
\end{theorem}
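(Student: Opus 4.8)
The plan is to derive Theorem~\ref{tM} as a fairly direct consequence of two things already established: Proposition~\ref{pEK}, which describes every simple graded Artinian real algebra as $\cM(D,\kappa)$ with the indicated isomorphism criterion, and Theorem~\ref{tgdar}, which enumerates all division gradings on simple finite-dimensional real algebras up to weak isomorphism. So the proof is essentially a bookkeeping argument assembling these pieces.

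First I would invoke Proposition~\ref{pEK}: given a real simple finite-dimensional algebra $R$ with a grading by a finite abelian group $G$, write $R\cong\cM(D,\kappa)$ for a simple graded division algebra $D$ with support $H\le G$ and a dimension function $\kappa:G/H\to\NN_0$. Here one should note that finite-dimensionality of $R$ forces $D$ to be finite-dimensional (since $\dim D=|H|\dim D_e$ divides $\dim R$, using Lemma~\ref{lsf}), and that $D$, being a direct summand-type piece of the simple algebra $R$, is simple as an ungraded algebra; this is exactly the setting of Theorem~\ref{tgdar}. Then apply Theorem~\ref{tgdar} to conclude that $D$ is weakly isomorphic to one of the algebras (i)--(xiii) on the list. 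This already gives the first sentence of Theorem~\ref{tM}. The second sentence, the isomorphism criterion $(D,\kappa)\sim(D',\kappa')$, is simply a restatement of the second half of Proposition~\ref{pEK}, so nothing further is needed there.

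The one genuine subtlety I would address carefully is a compatibility issue between the two notions of equivalence in play. Proposition~\ref{pEK} classifies the pairs $(D,\kappa)$ up to the relation $\sim$, which presupposes $D\cong D'$ with the \emph{same} support $H$ inside $G$ (an honest graded isomorphism, not just a weak one), whereas Theorem~\ref{tgdar} only pins $D$ down up to \emph{weak} isomorphism. So to make the statement of Theorem~\ref{tM} precise and correct I would clarify that ``$D$ is weakly isomorphic to one of the algebras in the list'' is to be read as: there is a graded division algebra on the list, and an isomorphism $G\to G$ carrying its support onto $H$ and the grading of the listed algebra onto that of $D$. Concretely, given an abstract weak isomorphism $(\alpha,\vp)$ with $\alpha$ an automorphism of the relevant support group, one extends $\alpha$ to an automorphism of $G$ (possible since $G$ is abelian and $H$ is a direct summand of any overgroup in the finite abelian setting, or one simply works inside $G$ from the start), and transports $\kappa$ along the induced permutation of $G/H$; this produces a pair $(D',\kappa')$ with $D'$ literally one of the listed algebras and $\cM(D,\kappa)\cong\cM(D',\kappa')$.

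I expect the main obstacle to be precisely this matching of equivalence relations, i.e.\ making sure the ``up to weak isomorphism'' classification of $D$ in Theorem~\ref{tgdar} can be upgraded, without loss of information, to a form that feeds cleanly into the ``$\sim$ with fixed support'' statement of Proposition~\ref{pEK}. Everything else---finiteness and simplicity of $D$, the existence and uniqueness of the pair $(D,\kappa)$---is already packaged in the earlier results, so beyond that compatibility remark the proof is short. I would close by noting that the list in Theorem~\ref{tgdar} is exhaustive and the inequivalence claims there (proved in the excerpt via the invariants $\dim R_e$, the exponent of $G$, and the degrees of central elements) guarantee that different listed $D$'s give genuinely different $\cM(D,\kappa)$'s, so the description is not redundant.
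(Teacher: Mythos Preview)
Your approach is correct and matches the paper's: Theorem~\ref{tM} is stated in the paper without an explicit proof, immediately after Proposition~\ref{pEK}, as a direct consequence of combining that proposition with Theorem~\ref{tgdar}. Your additional discussion of the compatibility between weak isomorphism (used in Theorem~\ref{tgdar}) and the $\sim$ relation (used in Proposition~\ref{pEK}) addresses a genuine subtlety the paper passes over in silence; one small caution is that a subgroup $H$ of a finite abelian group $G$ need not be a direct summand (e.g.\ $\ZZ_2$ in $\ZZ_4$), so your parenthetical ``or one simply works inside $G$ from the start'' is the right fix rather than the extension-of-automorphism argument.
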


Similar is the situations with locally finite simple finitary real algebras. Note that their  gradings, up to the classification of graded division algebras, have been classified in \cite{BBZ}, which generlized results of an earlier paper \cite{BZ10}. The result sounds like Theorem \ref{tM} but now $G$ can be any infinite abelian group, $H$ a finite subgroup and for each $A\in G/H$, $\kappa(A)$ is a (finite or infinite) cardinal.


\begin{thebibliography}{999}

\bibitem{BSZ}  Bahturin,Y.; Sehgal, S; Zaicev, M. {\it Group gradings on associative algebras}. J. Algebra {\bf 321} (2002), no. 1, 264--283.

\bibitem{BBZ}  Bahturin,Y.; Bre\v{s}ar, M.; Kochetov, M. {\it Group gradings on finitary simple Lie algebras}. Int. J. Algebra and Appl. {\bf 321} (2011). 

\bibitem{BK}
Bahturin, Y.; Kochetov, M. {\it Classification of group gradings on simple Lie algebras of types A, B, C and D}. J. Algebra \textbf{324} (2010), 2971--2989.

\bibitem{BShZ}Bahturin, Y.; Shestakov, I.; Zaicev, M. {\it Gradings on simple Jordan and Lie algebras}. J. Algebra, {\bf 283} (2005), no. 2, 849--868.

\bibitem{BZ02} Bahturin, Y.; Zaicev, M., \textit{Group gradings on matrix algebras}, Canad. Math. Bull., {\bf 45}(2002), 499 - 508.

\bibitem{BZ03}
Bahturin, Y.; Zaicev, M. {\it Graded algebras and graded identities}. Polynomial identities and combinatorial methods (pantelleria, 2001) Lect. Notes in Pure and Appl. Math. {\bf 235} Dekker, new York, 2003, 101  - 139.

\bibitem{BZ07}
Bahturin, Y.; Zaicev, M. {\it Involutions on graded matrix algebras}. J. Algebra, {\bf 315} (2007), no. 2, 527--540.

\bibitem{BZ10}
Bahturin, Y.; Zaicev, M. {\it Gradings on simple algebras of finitary matrices}. J. Algebra, \textbf{324} (2010), no. 6, 1279-1289.

\bibitem{EK} Elduque, A.; Kochetov, M. {\it Group gradings on simple Lie algebras}. AMS Math. Surv. Monographs  \textbf{189} (2013), 336p.

\bibitem{G} Garding, D.J.H. {\it Clifford Algebras: An Introduction}, LMS Student Texts  \textbf{78}(2011), 200p.

\bibitem{JSR} Jacobson, N. {\it Structure of Rings}. Colloquium Publications, {\bf 37}, American Math. Society, Providence, RI, 1964.

\bibitem{NVO82}
Nastasescu, C.; Van Oystaeyen, F. {\it Graded ring theory}. North-Holland Mathematical Library, {\bf 28}, North-Holland Publishing Co., 1982.

\bibitem{NVO}
Nastasescu, C.; Van Oystaeyen, F. {\it Methods of Graded Rings}. Lecture Notes in Mathematics, {\bf 1836}, Springer Verlag, 2004.

\bibitem{BR} Rosenfeld, B. {Geometry of Lie Groups.}  Mathematics and its Applications, 393. Kluwer Academic Publishers Group, Dordrecht, \textbf{1997}, xviii+393 pp.
\end{thebibliography}
\end{document}